\numberwithin{equation}{section}
\newtheorem{theorem}{Theorem}[section]
\newtheorem{lemma}[theorem]{Lemma}
\newtheorem{proposition}[theorem]{Proposition}
\newtheorem*{theorem*}{Theorem}
\renewcommand\tilde{\widetilde}
\def\R{\mathbb{R}}
\def\E{\mathbb{E}}
\def\Z{\mathbb{Z}}
\def\LM#1{\hbox{\vrule width.2pt \vbox to#1pt{\vfill \hrule width#1pt
height.2pt}}}
\def\LL{{\mathchoice {\>\LM7\>}{\>\LM7\>}{\,\LM5\,}{\,\LM{3.35}\,}}}
\def\restr{{\LL}}
\def\1{\mathbf{1}}
\def\XXint#1#2#3{{\setbox0=\hbox{$#1{#2#3}{\int}$ }
\vcenter{\hbox{$#2#3$ }}\kern-.57\wd0}}
\def\eps{\varepsilon}
\renewcommand{\subset}{\subseteq}
\def\lt{\left}
\def\rt{\right}
\def\les{\lesssim}
\def\ges{\gtrsim}
\def\Wper{W_{\textrm{per}}}
\def\supp{\textup{spt}\,}
\begin{document}
\title{A variational approach to regularity theory in optimal transportation}
\author{M. Goldman\thanks{ Universit\'e de Paris, CNRS, Sorbonne-Universit\'e,  Laboratoire Jacques-Louis Lions (LJLL), F-75005 Paris, France, \texttt{goldman@math.univ-paris-diderot.fr}}} 
\maketitle
\begin{abstract}\noindent
This paper describes recent results obtained in collaboration with M. Huesmann and  F. Otto on the regularity of optimal transport maps. The main result is a quantitative version
of the well-known fact that the linearization of the Monge-Amp\`ere equation around the identity is the Poisson equation. We present two applications of this result. The first one is 
a variational proof of the partial regularity theorem of Figalli and Kim and the second is the rigorous validation 
of some predictions made by Carraciolo and al. on the structure of the optimal transport maps in matching problems.   
\end{abstract}

\section{Introduction}
Following Caffarelli's groundbreaking papers \cite{CafJAMS92,CafAoM90bis}, the classical approach to regularity theory for solutions of the optimal transport problem goes through
maximum principle arguments and the construction of barriers (see the review paper \cite{DePFig_notes}). The aim of this note is to describe a recent alternative approach, more variational
in nature and based on the fact that the linearization of the Monge-Amp\`ere equation around the identity is the Poisson equation (see \cite{Viltop}). 
Our main achievement in this direction is an harmonic approximation result which says that if at a given scale the transport plan is close to the identity and if at the same scale both the starting and target measures are close
(in the Wasserstein metric) to be constant, then on a slightly smaller scale, the transport plan is actually extremely close to an harmonic gradient field. 
As in De Giorgi's approach to the regularity theory for minimal surfaces (see \cite{Maggi}) this allows to transfer the good regularity properties of
harmonic functions to the transport plan and obtain an ``excess improvement by tilting'' estimate. This may be used to propagate information from the macroscopic scale down 
to the microscopic scale through a Campanato iteration.\\
We give two applications of this result. The first one is a new proof of the partial regularity result of Figalli and Kim \cite{FigKim} (see also \cite{DePFig}). 
The second one is a validation up to the microscopic scale of the prediction by Caracciolo and al. \cite{CaLuPaSi14} that for the optimal matching
problem between a Poisson point process and the Lebesgue measure, the optimal transport plan is well approximated by the gradient of the solution to the corresponding Poisson equation with very high probability. \\

The plan of this note is the following. In Section \ref{sec:OT} we recall some standard results on optimal transportation. The harmonic approximation theorem is stated together with a sketch of proof in Section \ref{sec:harm}.
We then describe the application to the partial regularity result in Section \ref{sec:smallscale} and to the optimal matching problem in Section \ref{sec:largescale}.

\section{The optimal transport problem}\label{sec:OT}
Optimal transportation is nowadays a very broad and active field. We give here only a very basic and short introduction to the topic and refer the reader to the monographs \cite{Santam,Viltop}
for much more details.  For $\mu$ and $\lambda$ two positive measures on $\R^d$ with $\mu(\R^d)=\lambda(\R^d)$ the optimal transport problem (in its Lagrangian formulation) is 
\begin{equation}\label{OTLag}
 W^2(\mu,\lambda)=\inf_{\pi_1=\mu,\pi_2=\lambda} \int_{\R^d\times \R^d}|x-y|^2d\pi,
\end{equation}
where for a coupling $\pi$ on $\R^{d}\times \R^d$, $\pi_1$ (respectively $\pi_2$) denotes the first (respectively the second) marginal of $\pi$. Under very mild assumptions on $\mu$ and $\lambda$
(for instance compact supports), an optimal transference plan $\pi$ exists (see \cite{Viltop}). The optimality conditions are as follows:
\begin{theorem}\label{theo:brenier}
 Let $\pi$ be a coupling between $\mu$ and $\lambda$. 
 \begin{itemize}
 \item[(i) ] (Knott-Smith) It is optimal if and only if there exists a convex and 
 lower-semicontinuous function $\psi$ (also called the Kantorovich potential) such that $\supp \pi\subset \textrm{Graph}(\partial \psi)$. 
 \item[(ii)] (Brenier) Moreover, if $\mu$ does not give mass to Lebesgue negligible sets, then there exists a unique  $\nabla \psi$, gradient of a convex function, with $\nabla \psi\# \mu=\lambda$ and $\pi=(\textrm{id}\times\nabla \psi)\#\mu$. In this case we let $T=\nabla \psi$ be the optimal transport map.  
\end{itemize}
 \end{theorem}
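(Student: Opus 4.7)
The plan is to establish (i) via Kantorovich duality, recognising the dual optimality condition as a Fenchel--Legendre equality, and then to deduce (ii) from (i) by exploiting the differentiability properties of convex functions together with the absolute-continuity-type hypothesis on $\mu$.

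For (i), I would start from the Kantorovich dual of \eqref{OTLag},
\[W^2(\mu,\lambda)=\sup_{\varphi(x)+\tilde\psi(y)\le |x-y|^2}\left(\int\varphi\,d\mu+\int\tilde\psi\,d\lambda\right),\]
which under the compact support assumption admits a maximising pair $(\varphi,\tilde\psi)$ via $c$-concave envelopes and Arzel\`a--Ascoli. Expanding $|x-y|^2=|x|^2-2x\cdot y+|y|^2$ and setting $\psi(x):=\frac{1}{2}(|x|^2-\varphi(x))$, $\psi^\sharp(y):=\frac{1}{2}(|y|^2-\tilde\psi(y))$, the pointwise constraint becomes $\psi(x)+\psi^\sharp(y)\ge x\cdot y$. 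Replacing the pair by $(\psi^{**},\psi^*)$ can only increase the dual objective, so one may assume $\psi$ is convex and lower-semicontinuous with $\psi^\sharp=\psi^*$. The primal-dual equality then forces the Fenchel equality $\psi(x)+\psi^*(y)=x\cdot y$ on $\supp\pi$, which is precisely $y\in\partial\psi(x)$, giving $\supp\pi\subset\textrm{Graph}(\partial\psi)$. The converse is immediate: any plan supported on $\textrm{Graph}(\partial\psi)$ saturates the dual bound and is therefore optimal. An equivalent, perhaps cleaner route uses Rockafellar's theorem: $\supp\pi$ is $c$-cyclically monotone for any optimal $\pi$ by a rearrangement argument, and for the quadratic cost this reduces to classical cyclical monotonicity, which by Rockafellar is contained in the graph of the subdifferential of a convex lsc function.

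For (ii), the convex function $\psi$ produced by (i) is differentiable outside a set of Hausdorff dimension at most $d-1$, in particular Lebesgue-negligible. Under the hypothesis that $\mu$ charges no Lebesgue-null set, $\partial\psi(x)=\{\nabla\psi(x)\}$ for $\mu$-a.e.\ $x$. Disintegrating $\pi=\int\delta_x\otimes\pi_x\,d\mu(x)$ against its first marginal, the inclusion $\supp\pi_x\subset\partial\psi(x)$ collapses to $\pi_x=\delta_{\nabla\psi(x)}$ $\mu$-a.e., yielding $\pi=(\mathrm{id}\times\nabla\psi)\#\mu$ and, from the second marginal constraint, $\nabla\psi\#\mu=\lambda$. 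Uniqueness of $\nabla\psi$ as a $\mu$-a.e.\ map follows by a convexity argument: if two such gradients $\nabla\psi_1,\nabla\psi_2$ both worked, the half-sum of the associated plans would again be optimal with the same marginals and, by the disintegration argument applied once more, concentrated on the graph of a single gradient, forcing $\nabla\psi_1=\nabla\psi_2$ $\mu$-a.e.

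The most delicate step is producing the convex-conjugate pair $(\psi,\psi^*)$ as a dual maximiser: attainment of the Kantorovich supremum and the reduction to a Legendre-dual pair both rely on $c$-concave truncation, which is standard under compact supports but requires care in greater generality. The remaining ingredients---Lebesgue-a.e.\ differentiability of convex functions, disintegration of measures against a marginal, and the absolute-continuity hypothesis on $\mu$---are classical.
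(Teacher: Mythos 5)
The paper does not prove Theorem~\ref{theo:brenier}; it states it as a classical result and defers to the cited monograph \cite{Viltop}. Your argument is precisely the standard duality-based proof found there (and in Brenier's original work): pass to the Kantorovich dual, convert via the quadratic expansion to a Legendre-conjugate pair $(\psi,\psi^*)$, obtain the Fenchel equality $\psi(x)+\psi^*(y)=x\cdot y$ on $\supp\pi$ which is the subdifferential inclusion, then for (ii) use Lebesgue-a.e.\ differentiability of convex functions plus the hypothesis on $\mu$ to collapse $\partial\psi$ to a singleton $\mu$-a.e., with uniqueness by the linear-interpolation trick. The only points you rightly flag as delicate---dual attainment via $c$-concave truncation and keeping $\psi$ finite (hence locally Lipschitz) near $\supp\mu$ so that Rademacher/Zaj\'{\i}\v{c}ek applies---are exactly the technicalities the compact-support hypothesis is there to dispose of. The proof is correct and consistent with the approach the paper is implicitly invoking.
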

Let us point out that assuming that $\psi$ is regular and that both $\mu$ and $\lambda$ are smooth densities, the condition $T\#\mu=\lambda$ is nothing else than the Monge-Amp\`ere equation
\[
  \det \nabla^2 \psi= \frac{\mu}{ \lambda \circ \nabla \psi}.
\]
In particular, if both $\mu$ and $\lambda$ are close to (the same) constant density, then the Monge-Amp\`ere equation linearizes to the Poisson equation (see \cite[Ex. 4.1]{Viltop})
\begin{equation}\label{Poi}
 \Delta \psi=\mu-\lambda.
\end{equation}

We will also use the  Eulerian formulation of the optimal transport  problem.
\begin{theorem}[Benamou-Brenier]
 There holds
 \begin{equation}\label{BB}
   W_2^2(\mu,\lambda)=\inf_{(\rho,j)} \lt\{ \int_{\R^d}\int_0^1 \frac{1}{\rho}|j|^2 \ :  \partial_t\rho+\nabla\cdot j=0, \ \rho_0=\mu, \ \rho_1=\lambda\rt\}.
 \end{equation}
Moreover, if $\pi$ is an optimal transport plan for \eqref{OTLag}, then the density-flux pair $(\rho_t,j_t)$ defined for $t\in[0,1]$ by its action on test functions $(\zeta,\xi)\in C^0(\R^d)\times (C^0(\R^d))^d$ as
\begin{equation}\label{defj}
 \int_{\R^d} \zeta d\rho_t=\int_{\R^d\times\R^d} \zeta( (1-t)x+ty) d\pi \quad \textrm{ and } \quad \int_{\R^d} \xi\cdot dj_t=\int_{\R^d\times\R^d} \xi( (1-t)x+ty)\cdot(y-x) d\pi,
\end{equation}
is a minimizer of \eqref{BB}.
\end{theorem}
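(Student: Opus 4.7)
The plan is to prove equality of the two formulations by establishing each inequality separately, with the displacement interpolation \eqref{defj} serving simultaneously as the constructive witness for one bound and as the canonical minimizer.

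First I would prove the lower bound $W_2^2(\mu,\lambda)\le \inf_{(\rho,j)}(\cdot)$ using the candidate \eqref{defj}. Verification of the continuity equation is a direct computation: differentiating the definition of $\rho_t$ on a test function $\zeta\in C_c^\infty(\R^d)$ in $t$ gives
\[
 \frac{d}{dt}\int \zeta\,d\rho_t = \int \nabla\zeta((1-t)x+ty)\cdot(y-x)\,d\pi = \int \nabla\zeta\cdot dj_t,
\]
which is exactly $\partial_t\rho_t+\nabla\cdot j_t=0$ in the distributional sense, together with the correct endpoint conditions $\rho_0=\mu$, $\rho_1=\lambda$ by taking $t=0,1$. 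For the action, I would disintegrate: since $j_t$ is absolutely continuous with respect to $\rho_t$ with velocity $v_t$ defined by $v_t((1-t)x+ty)=y-x$ on $\supp\pi$, Jensen's inequality applied to the conditional expectation on the fibers of the map $(x,y)\mapsto (1-t)x+ty$ yields $\int |v_t|^2 d\rho_t\le \int|y-x|^2 d\pi$. Integrating over $t\in[0,1]$ gives $\int_0^1\!\int |j_t|^2/\rho_t\,dt \le \int|y-x|^2 d\pi = W_2^2(\mu,\lambda)$. In fact, because $\pi$ is optimal, its support lies in the graph of $\partial\psi$ and is $c$-cyclically monotone, so the map $(x,y)\mapsto (1-t)x+ty$ is injective on $\supp\pi$ for $t\in[0,1)$; the conditional expectation is then trivial and the inequality is an equality. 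This simultaneously proves one inequality and identifies $(\rho_t,j_t)$ as a minimizer.

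For the reverse inequality $\inf_{(\rho,j)}(\cdot)\ge W_2^2(\mu,\lambda)$, given any admissible pair $(\rho,j)$ I would set $v_t=j_t/\rho_t$ and consider the flow $\dot X_t(x)=v_t(X_t(x))$, $X_0(x)=x$. The continuity equation precisely ensures $X_t\#\mu=\rho_t$; in particular $X_1\#\mu=\lambda$, so $\pi:=(\mathrm{id}\times X_1)\#\mu$ is admissible in \eqref{OTLag}. By Cauchy--Schwarz in the time variable,
\[
 W_2^2(\mu,\lambda)\le \int_{\R^d}|X_1(x)-x|^2\,d\mu \le \int_{\R^d}\int_0^1|v_t(X_t(x))|^2\,dt\,d\mu = \int_0^1\!\int_{\R^d}\frac{|j_t|^2}{\rho_t}\,dt,
\]
where the last equality uses the pushforward identity $X_t\#\mu=\rho_t$. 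Taking the infimum over admissible $(\rho,j)$ closes the chain.

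The main obstacle, and the reason the above is only a sketch, is regularity: the velocity field $v_t=j_t/\rho_t$ is merely $L^2(d\rho_t\,dt)$, the measures $\mu,\lambda$ need not be absolutely continuous, and the continuity equation has to be interpreted in a distributional sense where $j_t$ is vector-valued measure. One therefore needs a density/approximation argument (convolution with a space-time mollifier, or a superposition principle à la Ambrosio--Gigli--Savaré) to justify defining the flow and integrating by parts. Once the smooth case is established, standard lower semicontinuity of the Benamou--Brenier action along narrow convergence of $(\rho_t,j_t)$ allows one to pass to the limit in the infimum, and both inequalities are preserved.
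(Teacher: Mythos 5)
The paper does not prove this statement; it quotes the Benamou--Brenier theorem as classical background with a reference to the monographs \cite{Santam,Viltop}, so there is no ``paper's own proof'' to compare against. Judged on its own terms, your sketch is a faithful outline of the standard argument: use the displacement interpolation as a competitor to bound the infimum from above by $W_2^2$, and use a flow-plus-Cauchy--Schwarz argument to bound it from below, with the superposition principle of Ambrosio--Gigli--Savar\'e (or a mollification argument) to handle the fact that $v_t=j_t/\rho_t$ is merely $L^2(d\rho_t\,dt)$ rather than Lipschitz. You correctly flag this as the genuine technical difficulty.

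Two small slips worth fixing. First, your opening sentence announces ``the lower bound $W_2^2\le\inf$'' but the computation that follows produces the \emph{opposite} inequality, $\inf\le\int_0^1\int|j_t|^2/\rho_t\,dt\le W_2^2$; the labels of the two directions are swapped relative to what the paragraphs actually prove (the second paragraph does correctly supply $W_2^2\le\inf$, so the overall logic is sound, but the bookkeeping should be cleaned up). Second, the injectivity of $(x,y)\mapsto(1-t)x+ty$ on $\supp\pi$ that you invoke to upgrade Jensen to an equality follows from monotonicity only for $t\in(0,1)$: if $(1-t)(x_1-x_2)=-t(y_1-y_2)$ then $(x_1-x_2)\cdot(y_1-y_2)=-\tfrac{t}{1-t}|y_1-y_2|^2\le 0$, forcing $y_1=y_2$ and hence $x_1=x_2$ --- but this reasoning degenerates at $t=0$, where injectivity would require $\pi$ to be a deterministic plan. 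Since $\{0\}$ is Lebesgue-null in time this does not affect the conclusion that $(\rho_t,j_t)$ is a minimizer, but the stated range should be $(0,1)$ rather than $[0,1)$.
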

Let us introduce some further notation. If $(\rho,j)$ is a minimizer of \eqref{BB}, we define $(\bar \rho, \bar j)$ the density-flux pair 
obtained by integrating in time (for instance $\bar \rho=\int_0^1 \rho_t$). For $R>0$ and $\mu$ a positive measure on $\R^d$, we denote by $W_{B_R}(\mu,\kappa)=W(\mu\restr B_R, \kappa \chi_{B_R} dx)$, the Wasserstein distance between the restriction of $\mu$ to the ball $B_R$ and the corresponding constant density $\kappa= \frac{\mu(B_R)}{|B_R|}$. \\
In order to obtain a local version of the equivalence between \eqref{OTLag} and \eqref{BB}, we will need an $L^\infty$ bound on the displacement (see \cite{GO,GHO1}).
\begin{lemma}\label{Linf}
  Let $\pi$ be a coupling between two measures $\mu$ and $\lambda$. Assume that $\supp \pi$ is monotone\footnote{Meaning that for every $(x_1,y_1)$ and $(x_2,y_2)$ in $\supp \pi$, $(x_1-x_2)\cdot(y_1-y_2)\ge 0$.} and that for some\footnote{We use the short-hand notation 
  $A\ll 1$ to indicate that there exists $\eps>0$ depending only on the dimension  
 such that $A\le \eps$. Similarly, $A\les B$ means that there exists a 
constant $C>0$ depending on the dimension such that $A\le C B$.} $R>0$, $E+D\ll1$ where 
  \begin{equation}\label{E}
   E= \frac{1}{R^{d+2}}\int_{(B_{6R}\times \R^d)\cup(\R^d\times B_{6R})} |x-y|^2 d\pi 
   \end{equation}
   and
   \begin{equation}\label{D}
   D = \frac{1}{R^{d+2}}W^2_{B_{6R}}(\mu,\kappa_\mu)+ \frac{1}{\kappa_\mu}(\kappa_\mu-1)^2+\frac{1}{R^{d+2}} W^2_{B_{6R}}(\lambda, \kappa_\lambda)+ \frac{1}{\kappa_\lambda}(\kappa_\lambda-1)^2.
   \end{equation}
Then, for every $(x,y)\in \supp \pi \cap \lt((B_{5R}\times \R^d)\cup (\R^d\times B_{5R})\rt)$ 
\begin{equation}\label{wg31}
 |x-y|\les R \lt(E+D\rt)^{\frac{1}{d+2}}.
\end{equation}
\end{lemma}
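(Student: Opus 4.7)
The plan is to argue by contradiction. Assume that there exists $(x_0,y_0)\in\supp\pi$ with $x_0\in B_{5R}$ (the case $y_0\in B_{5R}$ is symmetric by the $x\leftrightarrow y$ symmetry of $E$ and $D$), and set $\ell:=|x_0-y_0|$. The goal is to exhibit a set $A\subset B_{6R}$ of volume $|A|\asymp\ell^d$ on which every pair $(x,y)\in\supp\pi$ with $x\in A$ satisfies the uniform lower bound $|x-y|\gtrsim\ell$. The cost contribution of such pairs to the integral defining $E$ is then at least $\ell^2\,\mu(A)\gtrsim\ell^{d+2}$, which must be $\lesssim R^{d+2}E$ by the definition of $E$, forcing $\ell\lesssim R E^{1/(d+2)}$. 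A preliminary step reduces to the regime $\ell\le R$ so that the whole construction takes place inside $B_{6R}$ and both $x_0,y_0\in B_{6R}$.

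The key tool is the monotonicity of $\supp\pi$ around $(x_0,y_0)$: for every $(x,y)\in\supp\pi$,
\[
 (x-x_0)\cdot(y-y_0)\ge 0,\qquad |x_0-y|^2+|x-y_0|^2\ge\ell^2+|x-y|^2.
\]
One chooses $A$ to be a ball (or cone) of diameter $\asymp\ell$ placed appropriately relative to the segment $[x_0,y_0]$, so that the monotonicity constraint forces $y-y_0$ into a half-space incompatible with $|x-y|$ being much smaller than $\ell$. The lower bound $\mu(A)\gtrsim\ell^d$ then follows from the closeness of $\mu$ to $\kappa_\mu\chi_{B_{6R}}$ in Wasserstein distance: a standard Kantorovich--Rubinstein type comparison converts the bound $W^2_{B_{6R}}(\mu,\kappa_\mu)\lesssim R^{d+2}D$ into a control on $|\mu(A)-\kappa_\mu|A||$, which together with $|\kappa_\mu-1|\ll 1$ gives $\mu(A)\ge\tfrac12\ell^d$ as soon as $\ell\gtrsim R D^{1/(d+2)}$ (if this fails the conclusion of the lemma is already trivial).

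The principal obstacle is the geometric propagation step. Monotonicity by itself only confines $y-y_0$ to a direction-dependent half-space, leaving open the possibility that $y$ lies far from $y_0$ yet accidentally close to $x$, which would make $|x-y|$ small. Ruling this out requires a case analysis combining the half-space constraint with the cyclic inequality above and the triangle inequality: if $y$ is close to $y_0$ then $|x-y|\approx|x-y_0|\approx\ell$; if $y$ is close to $x_0$, this is excluded provided $A$ is chosen well separated from $x_0$; and if $y$ is far from both, the reverse triangle inequality yields $|x-y|\gtrsim|y-y_0|\gtrsim\ell$. Balancing these three regimes, together with the constraint $A\subset B_{6R}$, dictates the correct placement and size of $A$.
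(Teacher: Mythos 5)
Your overall strategy---argue by contradiction, use monotonicity to propagate the large displacement $\ell=|x_0-y_0|$ to a ball $A$ of radius $\sim\ell$ on which every $x$ moves by $\gtrsim\ell$, lower-bound $\mu(A)$ using the $D$-closeness to a constant, and read off $\ell^{d+2}\lesssim R^{d+2}(E+D)$---is exactly the one used in \cite{GO,GHO1}. The geometric case analysis you sketch (a ball placed between $x_0$ and $y_0$, the three regimes for the position of $y$) is also the right picture; the cleanest implementation lower-bounds $(y-x)\cdot e$ with $e=(y_0-x_0)/\ell$ via the monotonicity inequality $(x-x_0)\cdot(y-y_0)\ge 0$ and handles the large-$|y-y_0|$ regime by the reverse triangle inequality.

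However, there are two places where your sketch, read literally, would not give the stated estimate. First, the step $\mu(A)\gtrsim\ell^d$: a genuine Kantorovich--Rubinstein ($W_1$) comparison, even combined with $W_1\le\sqrt{\text{mass}}\,W_2$, yields $|\mu(A)-\kappa_\mu|A||\lesssim W_1/\ell\lesssim R^{d+1}D^{1/2}/\ell$, which is $\lesssim\ell^d$ only when $\ell\gtrsim R\,D^{1/(2(d+1))}$. Since $D^{1/(2(d+1))}>D^{1/(d+2)}$ for small $D$ and all $d\ge1$, this threshold is \emph{strictly worse} than the $\ell\gtrsim R\,D^{1/(d+2)}$ you claim, and one ends up with a weaker bound than \eqref{wg31}. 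The argument that actually produces the exponent $1/(d+2)$ uses the $W_2$ structure directly: in an optimal coupling of $\mu\restr B_{6R}$ to $\kappa_\mu\chi_{B_{6R}}$, any mass landing in the inner half of $A$ that originates outside $A$ must travel distance $\gtrsim\ell$, so $\bigl(\kappa_\mu|A/2|-\mu(A)\bigr)_+\le W^2_{B_{6R}}(\mu,\kappa_\mu)/(c\ell)^2\lesssim R^{d+2}D/\ell^2$, which is $\ll\ell^d$ precisely when $\ell\gtrsim R\,D^{1/(d+2)}$. Second, the ``preliminary reduction to $\ell\le R$'' is asserted but not justified; it is needed to ensure $A\subset B_{6R}$, and it is not immediate since the hypothesis only gives $E+D\ll1$, not an a priori bound on $\ell$. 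One must either run a first, cruder pass with a truncated set $A'\subset B_{6R}$ of radius $\sim R$ (the same monotonicity argument then forces $|y-x|\gtrsim R$ on $A'$, hence $E\gtrsim1$, a contradiction), or set up the argument so that the truncation is built in. Both points are fixable, but as written the sketch elides the two steps that carry the correct exponent.
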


\section{The harmonic approximation theorem}\label{sec:harm}
We now state the harmonic approximation theorem. By scaling invariance, it is enough to state it at the unit scale $R=1$. For $\mu$, $\lambda$ two positive measures and $\pi$ an optimal coupling between them, we define the ``excess'' energy $E$ as in \eqref{E} and the 
 distance to the data $D$ as in \eqref{D}.

\begin{theorem}(\cite[Th. 1.4]{GHO1})\label{theo:harmonicLagintro}
 For every $0<\tau\ll 1$, there exist $\eps(\tau)>0$ and $C(\tau)>0$ such that provided $E+D\le \eps$,  there exists a radius $R\in (3,4)$ such that if $\Phi$ is a  solution of ($\nu$ denotes here the external normal to $\partial B_R$)
 \begin{equation}\label{ma89}
  \Delta \Phi=c \ \textrm{ in } B_{R} \qquad \textrm{ and } \qquad \nu \cdot \nabla \Phi=\nu\cdot \bar j \ \textrm{ on } \partial B_{R},
 \end{equation}
where  $c$ is the generic constant for which this equation is solvable, then
\begin{equation}\label{eq:mainestimateintro}
 \int_{(B_{1}\times \R^d)\cup(\R^d\times B_{1})} |x-y+\nabla \Phi(x)|^2 d\pi\le \tau E+ C D.
\end{equation}
% and 
% \begin{equation}\label{introestimPhi}
%   \barR^2 \sup_{B_{2\barR}}|\nabla^2 \Phi|^2+ \sup_{B_{2\barR}}|\nabla \Phi|^2\les E(\barR)+D(\barR).
% \end{equation}
\end{theorem}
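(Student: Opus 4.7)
The natural strategy is to pass from the Lagrangian to the Eulerian picture via Benamou-Brenier and exploit that the linearization of Monge-Amp\`ere is Poisson's equation. Integrating the continuity equation in time gives $\nabla\cdot\bar j=\mu-\lambda$; since both $\mu$ and $\lambda$ are close to a common constant in Wasserstein distance, the divergence of $\bar j$ is small, and one expects $\bar j\approx\nabla\Phi$ with $\Phi$ solving \eqref{ma89}. The plan is to quantify this closeness and then transfer the estimate back to the Lagrangian side using the $L^\infty$ displacement bound of Lemma \ref{Linf}.

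First I would select a good radius $R\in(3,4)$ by Fubini, so that the flux of mass across $\partial B_R$ and the localized Wasserstein defects on $\partial B_R$-slices are all controlled by $E+D$; this is the reason for the freedom in $R$. Lemma \ref{Linf} then ensures $|x-y|\les(E+D)^{1/(d+2)}$ on $\supp\pi$ near $B_R$, so that mass starting in $B_R$ essentially ends in $B_R$ and vice versa. Next I would solve \eqref{ma89} (with $c$ fixed by the compatibility condition $c|B_R|=\int_{\partial B_R}\bar j\cdot\nu$) and invoke Neumann-elliptic regularity to obtain
\[
\|\nabla\Phi\|_{L^2(B_R)}^2\les E+D,\qquad \|\nabla\Phi\|_{L^\infty(B_2)}+\|\nabla^2\Phi\|_{L^\infty(B_2)}\les(E+D)^{1/2},
\]
the interior bound coming from $\Phi$ being harmonic up to a constant away from the boundary.

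The core computation is to expand the square
\[
\int|x-y+\nabla\Phi(x)|^2\,d\pi=\int|x-y|^2\,d\pi+2\int(x-y)\cdot\nabla\Phi(x)\,d\pi+\int|\nabla\Phi(x)|^2\,d\pi.
\]
For the cross term I would use the representation \eqref{defj} together with a first-order Taylor expansion of $\nabla\Phi$ along the segment $(1-t)x+ty$ to obtain
\[
\int(x-y)\cdot\nabla\Phi(x)\,d\pi=-\int\bar j\cdot\nabla\Phi+O\Big(\|\nabla^2\Phi\|_{L^\infty}\int|x-y|^2\,d\pi\Big),
\]
where the error is $\les(E+D)^{1/2}E\le\tau E$ once $\eps$ is small enough. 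Integration by parts using $\nabla\cdot\bar j=\mu-\lambda$ and the Neumann condition gives $\int\bar j\cdot\nabla\Phi=\int|\nabla\Phi|^2-\int\Phi(\mu-\lambda-c)$, and the last integral is handled by Kantorovich duality, $|\int\Phi\,d(\mu-\kappa_\mu)|\le\|\nabla\Phi\|_{L^2}W_{B_R}(\mu,\kappa_\mu)$, contributing $O(D+\tau\int|\nabla\Phi|^2)$.

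Assembling these identities reduces the statement to the key inequality
\[
\int|x-y|^2\,d\pi-\int|\nabla\Phi|^2\le\tau E+CD.
\]
Benamou-Brenier and convexity of $(j,\rho)\mapsto|j|^2/\rho$ give $\int|\bar j|^2/\bar\rho\le\int|x-y|^2\,d\pi$, and the Helmholtz-type identity $\int|\bar j-\nabla\Phi|^2=\int|\bar j|^2-\int|\nabla\Phi|^2+2\int\Phi(\mu-\lambda-c)$ shows that the target is equivalent to $\int|\bar j-\nabla\Phi|^2\le\tau E+CD$. \textbf{This is the main obstacle:} pure algebra yields only the trivial bound $\int|\bar j-\nabla\Phi|^2\ge 0$, not a $\tau E$ improvement. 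The decisive step, in the spirit of De Giorgi's harmonic approximation, would be a compactness-contradiction argument at the rescaled level $u_n=(y-x)/\sqrt{E_n}$: assuming the theorem fails along a sequence with $E_n+D_n\to 0$, the rescaled fluxes $\bar j_n/\sqrt{E_n}$ should converge weakly to a harmonic gradient field, and the strong convergence of the Poisson projections $\nabla\Phi_n/\sqrt{E_n}$ coming from elliptic regularity would then force the defect to vanish, a contradiction.
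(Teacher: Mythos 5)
Your outline is correct up to the decisive step, and it matches the paper's structure: pick a good radius $R\in(3,4)$ by Fubini/pigeonhole, use Lemma \ref{Linf} to localize, get elliptic regularity bounds $\|\nabla\Phi\|_{L^2(B_R)}^2\les E+D$ and interior $C^\infty$ bounds, expand the square, Taylor-expand the cross term and integrate by parts via the continuity equation, and correctly reduce everything to the key inequality
\[
\int|x-y|^2\,d\pi-\int_{B_R}|\nabla\Phi|^2\le\tau E+C D.
\]
You also correctly observe that no amount of Cauchy--Schwarz or algebraic manipulation produces this gain, because it is precisely the quantitative content of the harmonic approximation.

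The genuine divergence from the paper is in how that key inequality is established. You reach for the classical compactness-and-contradiction scheme in the spirit of De Giorgi (and of Figalli--Kim), but you only gesture at it: ``the rescaled fluxes should converge weakly to a harmonic gradient field, and the strong convergence of the Poisson projections would then force the defect to vanish.'' This is not yet an argument. Weak $L^2$ convergence of $\bar j_n/\sqrt{E_n}$ does not let you pass to the limit in the quadratic excess (there could be a concentration defect), so one would need some form of strong convergence, and establishing it requires exactly the sort of compactness machinery (for the Benamou--Brenier functional or for Alexandrov solutions of Monge--Amp\`ere) that the paper is designed to avoid. In contrast, the paper proves the key inequality \emph{directly} and \emph{quantitatively}: it constructs, in Lemma \ref{Lconintro}, an explicit admissible competitor $(\tilde\rho,\tilde j)$ for the local Benamou--Brenier problem which equals $(1,\nabla\Phi)$ in the bulk $B_{R-r}$ and is corrected in a thin boundary layer $B_R\setminus B_{R-r}$ by a pair $(s,q)$ with $\nu\cdot q=f-\bar f$, built by a duality argument; optimizing the layer thickness $r$ yields $\int\frac{1}{\tilde\rho}|\tilde j|^2-\int|\nabla\Phi|^2\les(E+D)^{(d+2)/(d+1)}$, and then plain minimality of $(\rho,j)$ gives the target when combined with the almost-orthogonality Lemma \ref{Lort}. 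This is a different mechanism: minimality against a constructed competitor rather than compactness.

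So the gap in your proposal is concrete: the ``decisive step'' is exactly where the paper's main new idea lives, and you replace it with an incomplete indirect argument that, even if it could be made to work, would lose the explicit, soft-analysis-free character of the approach (important for the paper's second application at large scales). You also call the reduction to $\int|\bar j-\nabla\Phi|^2\le\tau E+CD$ an ``equivalence,'' but only one implication is clear since $\int|\bar j|^2/\bar\rho\le\int|x-y|^2\,d\pi$ does not reverse; this is minor but worth tightening. To align with the paper's proof you would replace your last paragraph by the competitor construction of Lemma \ref{Lconintro} and invoke minimality of the optimal Eulerian pair.
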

The proof of Theorem \ref{theo:harmonicLagintro} is actually performed at the Eulerian level. Thanks to Lemma \ref{Linf}, it is indeed enough to prove:
\begin{theorem}\label{theo:harmonicEul}
 For every $0<\tau\ll 1$, there exist $\eps(\tau)>0$ and $C(\tau)>0$ such that provided $E+D\le \eps$,  there exists a radius $R\in (3,4)$ such that if $\Phi$ solves \eqref{ma89}, then
\begin{equation}\label{eq:maineulerian}
\int_{B_2}\int_0^1 \frac{1}{\rho}|j-\rho\nabla \Phi|^2 \le \tau E+ C D.
\end{equation}
% and 
% \begin{equation}\label{introestimPhi}
%   \barR^2 \sup_{B_{2\barR}}|\nabla^2 \Phi|^2+ \sup_{B_{2\barR}}|\nabla \Phi|^2\les E(\barR)+D(\barR).
% \end{equation}
\end{theorem}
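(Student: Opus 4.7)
The plan is to exploit the convexity of the Benamou--Brenier integrand $(\rho,j)\mapsto |j|^2/\rho$, comparing $(\rho,j)$ against a competitor built from the harmonic lift $\nabla\Phi$, and to upgrade the resulting estimate to the sharper form $\tau E+CD$ via a compactness argument. The freedom of the radius $R\in(3,4)$ will be used as a ``good slice'' buffer between the a priori energy $E$ on $B_{6R}$ and the target integral on $B_2$.

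First I would exploit the freedom $R\in(3,4)$ by averaging the radial quantities $r\mapsto\int_{\partial B_r}\int_0^1(\rho+|j|^2/\rho)\,dt\,d\HH$ and selecting $R$ on which the boundary traces of $\rho$, $j$, and $\bar j$ are controlled by $E+D$. On this $R$, the compatibility condition forces $c=|B_R|^{-1}\int_{B_R}(\mu-\lambda)$ with $|c|\lesssim\sqrt{D}$, and standard elliptic estimates give $\int_{B_R}|\nabla\Phi|^2\lesssim E+D$. The engine of the argument is the pointwise orthogonality identity
\[
\frac{|j-\rho\nabla\Phi|^2}{\rho}=\frac{|j|^2}{\rho}-2\,j\cdot\nabla\Phi+\rho|\nabla\Phi|^2,
\]
integrated over $B_R\times(0,1)$. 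The cross term integrates by parts against the continuity equation $\nabla\cdot\bar j=\mu-\lambda$, Green's identity for $\Delta\Phi=c$, and the Neumann condition $\nu\cdot\nabla\Phi=\nu\cdot\bar j$; combined with the fact that $\bar\rho-1$ is controlled in an integrated sense by $D$, this reduces the right-hand side to $\int_{B_R}\int_0^1|j|^2/\rho-\int_{B_R}|\nabla\Phi|^2$ modulo errors of size $D$.

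To close the estimate, I would bound $\int_{B_R}\int_0^1|j|^2/\rho$ from above by $\int_{B_R}|\nabla\Phi|^2+CD$ using the minimality of $(\rho,j)$ for the Benamou--Brenier problem, via a three-phase competitor $(\tilde\rho,\tilde j)$ inside $B_R$: (i) transport $\mu\restr B_R$ optimally to the constant $\kappa\chi_{B_R}$ over a short time window at cost $\lesssim D$; (ii) run the deterministic harmonic flow $\tilde j=\tilde\rho\nabla\Phi$ on the middle window at cost $\approx\int_{B_R}|\nabla\Phi|^2$; (iii) relax from the resulting density to $\lambda\restr B_R$ at cost $\lesssim D$. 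Gluing $(\tilde\rho,\tilde j)$ to $(\rho,j)$ across $\partial B_R$ using the good-slice trace bounds and invoking minimality yields the claimed upper bound; substituting back collapses the $\int|\nabla\Phi|^2$ contributions and gives $\int_{B_2}\int_0^1 |j-\rho\nabla\Phi|^2/\rho\lesssim D$. The sharper prefactor $\tau$ in front of $E$ is obtained by a compactness/contradiction argument: if the bound failed for some $\tau>0$, a sequence $(\rho_n,j_n)$ with $E_n+D_n\to 0$ rescaled by $\sqrt{E_n}$ would converge, via Lemma~\ref{Linf} and lower semicontinuity of the Benamou--Brenier integrand, to a nonzero limit solving the linearized Poisson problem \eqref{Poi} with vanishing residual — a contradiction. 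I expect the main obstacle to be precisely the admissible gluing of the competitor across $\partial B_R$ together with the passage to the limit in the degenerate integrand $|j|^2/\rho$; both steps rely critically on the $L^\infty$ displacement bound of Lemma~\ref{Linf} to avoid concentration on the vanishing set of $\rho$.
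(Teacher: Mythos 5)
Your outline captures the skeleton of the paper's argument correctly: choose a good radius $R\in(3,4)$ by averaging traces, expand the square to isolate a cross term and a $\bar\rho-1$ term, integrate the cross term by parts against $\Delta\Phi=c$ and the continuity equation, and close by comparing against a competitor $(\tilde\rho,\tilde j)$ via minimality of the Benamou--Brenier pair. The paper's Lemmas~\ref{Lort} and~\ref{Lconintro} are exactly these two halves, so the decomposition matches.

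However, your description of where the prefactor $\tau$ comes from is wrong, and this is not a cosmetic point. You claim the direct variational argument "gives $\int_{B_2}\int_0^1|j-\rho\nabla\Phi|^2/\rho\lesssim D$" and that the sharper $\tau E$ must then be extracted by a compactness/contradiction argument. Neither half of the paper's estimate produces a bound of order $D$ alone: the cross term in the orthogonality lemma yields an error of size $(E+D)^{1/2}D^{1/2}$, which by Young's inequality is $\tau E + C(\tau)D$ and cannot be improved to $CD$ without assuming $E\lesssim D$; and the competitor's excess energy is of order $(E+D)^{(d+2)/(d+1)}$, whose superlinear exponent is what allows it to be absorbed as $\tau E + C(\tau)D$ once $E+D\le\eps(\tau)$. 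So $\tau$ is obtained \emph{directly and quantitatively} from Young's inequality plus the strict superlinearity $\tfrac{d+2}{d+1}>1$; no compactness is needed, and indeed the entire point of the "variational approach" is to avoid a soft compactness argument, which would render $\eps(\tau)$ non-explicit and break the scale invariance needed for the Campanato iteration that follows. Your compactness step is therefore both a genuine departure from the paper and unnecessary — and, as written, it rests on a false intermediate claim.

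The second gap is the gluing across $\partial B_R$, which you correctly flag as "the main obstacle" but do not resolve. The issue is that the outgoing flux $f=\nu\cdot j$ on $\partial B_R\times(0,1)$ is genuinely time-dependent, whereas the harmonic flux $\nu\cdot\nabla\Phi=\bar f$ is stationary; a competitor with $\tilde j=\tilde\rho\nabla\Phi$ up to the boundary does not satisfy $\nu\cdot\tilde j=f$ and hence is not admissible for the localized Benamou--Brenier problem. The paper's resolution is a boundary-layer construction in $B_R\setminus B_{R-r}\times(0,1)$: one adds a corrector $(s,q)$ with $\partial_t s+\nabla\cdot q=0$, $s_0=s_1=0$, $\nu\cdot q=f-\bar f$, and energy $\int|q|^2\lesssim r\int_{\partial B_R}\int_0^1(f-\bar f)^2$, valid for $r\gtrsim\bigl(\int_{\partial B_R}\int_0^1(f-\bar f)^2\bigr)^{1/(d+1)}$, obtained by a duality argument in the spirit of Allaire--Choquet--Otto. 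Optimizing in $r$ is precisely what produces the exponent $\tfrac{d+2}{d+1}$. Without this corrector your "three-phase" competitor is inadmissible, and without the exponent $\tfrac{d+2}{d+1}$ you do not get the $\tau E$ for free.

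Finally, a smaller but real technical point you omit: $\bar f$ is only a measure (the time-averaged flux through the sphere of a measure-valued $j$), so the elliptic estimate $\int_{B_R}|\nabla\Phi|^2\lesssim\int_{\partial B_R}\bar f^2$ does not apply to $\Phi$ directly. The paper first proves the estimate for a mollified potential $\phi$ whose Neumann datum $\hat g$ satisfies $\int\hat g^2\lesssim E+D$ and $W^2(\bar f_\pm,\hat g_\pm)\lesssim(E+D)^{(d+3)/(d+2)}$, exploiting the $L^\infty$ displacement bound of Lemma~\ref{Linf} at good radii, and only then passes to $\Phi$. Your sketch treats $\int_{B_R}|\nabla\Phi|^2\lesssim E+D$ as immediate from elliptic regularity, which elides this regularization step.
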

To simplify a bit the discussion, we will assume from now on that $\lambda=\kappa_\mu=1$, so that $D= W^2_{B_{6}}(\mu, 1)$.\\
The proof of Theorem \ref{theo:harmonicEul} is based on three ingredients. 
The first of them is the  choice of  a 'good' radius $R$. Indeed, as will become apparent in the discussion below,  we need a control on various quantities and this seems to be possible only for generic radii.
The second ingredient is an almost orthogonality property. The last one is  the construction of a competitor for \eqref{BB}. \\
We define the measure $f= \nu\cdot j$ on $\partial B_R\times (0,1)$ and then let $\bar f=\int_0^1 f= \nu\cdot \bar j$. Before discussing the almost
orthogonality property and the construction, let us point out that for our estimates we would need to control the Dirichlet energy
$\int_{B_R}|\nabla \Phi|^2$ by $E+D$. Since by elliptic regularity,
\begin{equation}\label{elliptic}
 \int_{B_R}|\nabla \Phi|^2\les \int_{\partial B_R} {\bar f}^2,
\end{equation}
this is only possible if $\bar f$ is controlled in $L^2$ (or at least in $H^{-1/2}$). 
In order to solve this issue,  \eqref{eq:maineulerian} is first proven with $\phi$ instead of $\Phi$ where $\phi$ solves
\[
 \Delta \phi=c \ \textrm{ in } B_{R} \qquad \textrm{ and } \qquad \nu \cdot \nabla \phi=\hat{g} \ \textrm{ on } \partial B_{R},
\]
where $\hat{g}$ is  a regularized version of $\bar f$ in the sense that\footnote{For a measure $\mu$, we note $\mu_{\pm}$ its positive/negative part.} 
\begin{equation}\label{boundg}
 \int_{\partial B_R} \hat{g}^2\les E+D \qquad \textrm{and} \qquad W^2(\bar f_{\pm},\hat{g}_{\pm})\les \lt(E+D\rt)^{\frac{d+3}{d+2}}.
\end{equation}
 The density $\hat{g}$ is obtained by projection on $\partial B_R$, using the fact that for 'good' radii, thanks to \eqref{wg31}, 
 the number of particles crossing $\partial B_R$ is controlled by $E+D$. We will however forget here about this difficulty and assume that we may choose $\hat{g}=\bar f$ (and thus $\phi=\Phi$). In particular, in view of \eqref{boundg},
 we will assume that we have the bound 
 \begin{equation}\label{boundf}
  \int_{\partial B_R}\int_0^1 f^2\les E+D.
 \end{equation}

We may now state the almost-orthogonality property: 
 \begin{lemma}(Orthogonality)\label{Lort}
  For every $0<\tau\ll 1$, there exist $\eps(\tau)>0$ and $C(\tau)>0$ such that if $E+D\le \eps$, 
  \begin{equation}\label{io15intro}
   \int_{B_2}\int_0^1 \frac{1}{\rho}|j-\rho\nabla \Phi|^2
   \le  \lt(\int_{B_R}\int_0^1 \frac{1}{\rho}|j|^2-\int_{B_R}|\nabla \Phi|^2\rt)+\tau E +C D.
  \end{equation}
 \end{lemma}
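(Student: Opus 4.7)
My plan is to expand the square on the left-hand side of \eqref{io15intro}, convert the resulting cross term via integration by parts into a Wasserstein-type error, and absorb the remaining defects into $\tau E + CD$ using the available elliptic bounds on $\Phi$. Expanding
$$\frac{1}{\rho}|j - \rho\nabla\Phi|^2 = \frac{|j|^2}{\rho} - 2\, j\cdot\nabla\Phi + \rho|\nabla\Phi|^2,$$
the inclusion $B_2 \subset B_R$ and the non-negativity of the first summand let me upper-bound $\int_{B_2}\int_0^1 |j|^2/\rho$ by $\int_{B_R}\int_0^1 |j|^2/\rho$. After further splitting $\rho = 1 + (\rho-1)$ in the quadratic term, \eqref{io15intro} reduces to showing
$$-2\int_{B_R}\bar j\cdot\nabla\Phi + \int_{B_R}|\nabla\Phi|^2 + \int_{B_R}\int_0^1(\rho-1)|\nabla\Phi|^2 \les -\int_{B_R}|\nabla\Phi|^2 + \tau E + CD.$$

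The key identity comes from integrating the continuity equation in time: this yields $\nabla\cdot\bar j = \mu - 1$ in $B_R$, which together with the (simplified) boundary condition $\nu\cdot\bar j = \bar f = \nu\cdot\nabla\Phi$ on $\partial B_R$ and the normalization $\int_{B_R}\Phi = 0$ gives, by Green's formula,
$$\int_{B_R}\bar j\cdot\nabla\Phi = \int_{\partial B_R}\Phi\bar f - \int_{B_R}\Phi(\mu-1) = \int_{B_R}|\nabla\Phi|^2 - \int_{B_R}\Phi(\mu-1),$$
the second equality using that $\Delta\Phi$ is constant. Substituting, the target inequality collapses to the error bound
$$2\int_{B_R}\Phi(\mu-1) + \int_{B_R}\int_0^1(\rho-1)|\nabla\Phi|^2 \les \tau E + CD.$$

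Both residual terms are controlled via elliptic regularity for $\Phi$. The Dirichlet estimate \eqref{elliptic} together with the boundary bound \eqref{boundf} gives $\|\nabla\Phi\|_{L^2(B_R)}^2 \les E+D$; since $\Delta\Phi = c$ is constant (with $|c|\les \sqrt{D}$ forced by Neumann compatibility and the bound on $\kappa_\mu - 1$ contained in $D$), the field $\nabla\Phi - c x/d$ is harmonic on $B_R$, and interior mean-value estimates upgrade this to $\|\nabla\Phi\|_{L^\infty(B_3)} \les \sqrt{E+D}$. For the first error term, Wasserstein duality applied to a coupling between $\mu\restr B_R$ and $\kappa_\mu \chi_{B_R}$ (with the harmless correction $|\kappa_\mu - 1| \les \sqrt{D}$) yields $|\int_{B_R}\Phi(\mu-1)| \les \|\nabla\Phi\|_{L^\infty}\sqrt{D} \les \sqrt{(E+D)D}$, absorbable into $\tau E + C_\tau D$ by Young's inequality. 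For the second error term, one writes $\int_{B_R}\int_0^1 (\rho-1)|\nabla\Phi|^2 \le \|\nabla\Phi\|_{L^\infty(B_3)}^2 \int_0^1 \|\rho_t - 1\|_{L^1(B_R)}\,dt$, and the integrated $L^1$ deviation is controlled using the $L^\infty$ displacement bound of Lemma \ref{Linf}, which forces each $\rho_t$ to remain close to its endpoints $\mu$ and $\lambda = 1$, both $\sqrt{D}$-close to unity.

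The main obstacle is this last step: the Benamou--Brenier density $\rho_t$ is only implicitly defined, so no pointwise closeness to $1$ is directly available, and one must propagate the smallness of the displacement (from $E$) and of $\mu - 1$ (from $D$) to every time slice via the Lipschitz character of the displacement interpolation, ruling out mass concentration at intermediate times. A secondary subtlety is the interior $L^\infty$ control on $\nabla\Phi$, which essentially exploits that $\Delta\Phi$ is \emph{constant} — subtracting a quadratic reveals a harmonic function for which mean-value inequalities apply — and this is precisely what the good-radius selection machinery and the regularized density $\hat g$ are designed to secure at the technical level we have glossed over.
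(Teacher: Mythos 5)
Your overall architecture matches the paper's: expand the square, pass from $B_2$ to $B_R$ by non-negativity of the integrand, integrate by parts using the continuity equation and the Neumann boundary condition to convert the cross term into $\int_{B_R}\Phi\,d(\mu-1)$, and then bound the two residual error terms. The algebra up to that point is correct and essentially identical to the paper (you work with the time-integrated $\bar j$, while the paper keeps the time-resolved formulation and observes that $\int_0^1(\bar f - f) = 0$ kills the boundary term, but these are equivalent routes).

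Where the proposal breaks down is in the estimation of the two residual terms, and both failures stem from the same source. You claim an interior $L^\infty$ bound $\|\nabla\Phi\|_{L^\infty(B_3)}\les\sqrt{E+D}$ via subtracting a quadratic and invoking mean-value estimates. That much is fine, but you then apply it to integrals over the \emph{full} ball $B_R$ (both $\int_{B_R}\Phi\,d(\mu-1)$ and $\int_{B_R}(\bar\rho-1)|\nabla\Phi|^2$ live on $B_R$, not $B_3$), silently promoting $\|\nabla\Phi\|_{L^\infty(B_3)}$ to $\|\nabla\Phi\|_{L^\infty(B_R)}$. This promotion is unavailable: $\Phi$ solves a Neumann problem with boundary data $\hat g$ that is controlled only in $L^2(\partial B_R)$ (this is precisely the content of \eqref{boundg}--\eqref{boundf}), and $L^2$ Neumann data does not yield up-to-the-boundary $L^\infty$ control on $\nabla\Phi$. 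The paper's estimate \eqref{eq:orth1} deliberately stays at the $L^2$ level, pairing the Dirichlet energy bound \eqref{elliptic} with a $W\sim H^{-1}$ comparison; this works all the way to $\partial B_R$ precisely because the Dirichlet energy bound is available there while the $L^\infty$ bound is not.

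The second gap is the density defect term $\int_{B_R}(\bar\rho-1)|\nabla\Phi|^2$, which you yourself flag as the main obstacle. Your proposed route---bound $\int_0^1\|\rho_t-1\|_{L^1(B_R)}\,dt$ via the $L^\infty$ displacement bound and the smallness of $\mu-1$---is not a program that closes: smallness of the displacement tells you mass doesn't travel far, but says nothing against intermediate-time concentration, which is exactly what you would need to preclude to get $L^1$ closeness of $\rho_t$ to $1$. The tool you are missing is \emph{McCann's displacement convexity}: when $\mu=\chi_\Omega$ with $B_6\subset\Omega$ and $\lambda=1$, displacement convexity of the entropy gives $\bar\rho\le 1$ pointwise, whence the term is simply non-positive and can be dropped. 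This is the paper's argument in the simple case; the general case additionally needs the bound $W^2_{B_R}(\bar\rho,1)\les E+D$ (one of the quantities that the good-radius selection is designed to control) together with elliptic estimates on a regularized $\Phi$. Without invoking displacement convexity or the intermediate-time Wasserstein bound, there is no route to controlling this term, and the proof does not close.
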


\begin{proof}[Sketch of proof]
 Expanding the squares we have 
 \begin{align*}
   \int_{B_R}\int_0^1 \frac{1}{\rho}|j-\rho\nabla \Phi|^2&=\lt(\int_{B_R}\int_0^1 \frac{1}{\rho}|j|^2-\int_{B_R}|\nabla \Phi|^2\rt)\\
   &\qquad +2\int_{B_R}\int_0^1 (\nabla \Phi-j)\cdot \nabla \Phi +\int_{B_R}(\bar \rho-1)|\nabla \Phi|^2.
 \end{align*}
Let us estimate the two error terms. Using integration by parts we have  (assuming without loss of generality that $\int_{B_R} \Phi=0$)
\begin{align*}
 \int_{B_R}\int_0^1 (\nabla \Phi-j)\cdot \nabla \Phi &= - \int_{B_R}\int_0^1 (\Delta \Phi- \nabla \cdot j) \Phi  +\int_{\partial B_R} \int_0^1 (\bar f- f) \Phi\\
 &=-\int_{B_R}\int_0^1 \partial_t \rho \Phi\\
 &=\int_{B_R} \Phi d(\mu-1)
\end{align*}
Forgetting higher order terms (and assuming that $\frac{\mu(B_R)}{|B_R|}=1$), we have (recall that the Wasserstein distance is homogeneous to the $H^{-1}$ norm)
\begin{equation}\label{eq:orth1}
 \lt|\int_{B_R} \Phi d(\mu-1)\rt|\les \lt(\int_{B_R} |\nabla \Phi|^2\rt)^{1/2} W_{B_R}(\mu,1)\stackrel{\eqref{elliptic}\&\eqref{boundf}}{\les} (E+D)^{1/2} D^{1/2}\stackrel{\textrm{Young}}{\le} \tau E+\frac{C}{\tau} D.
\end{equation}
Regarding the second term, in the case when $\mu=\chi_{\Omega}$ for some set $B_6\subset \Omega$, we may argue as in \cite[Lem. 3.2]{GO} and obtain that by McCann's displacement convexity, 
$\bar \rho\le 1$ and thus $\int_{B_R}(\bar \rho-1)|\nabla \Phi|^2\le 0$. For generic measures $\mu$ the argument is more subtle and requires a combination
of elliptic estimates for (a regularized version of) $\Phi$ together with the bound 
\[
 W^2_{B_R}(\bar \rho,1)\les E+ D,
\]
which holds for 'good' radii.

\end{proof}

As explained above, the last ingredient is the construction of a competitor:
\begin{lemma}\label{Lconintro}
 For every $0<\tau\ll1$, there exist $\eps(\tau)>0$ and $C(\tau)>0$ such that if $E+D\le \eps$,  there exists a density-flux pair $(\tilde \rho, \tilde j)$ such that 
 \begin{equation}\label{conteqloc}
  \begin{cases}
   \partial_t \tilde{\rho}+\nabla\cdot \tilde j=0 & \textrm{in } B_R\times (0,1)\\
   \tilde{\rho}_0=\mu, \ \tilde{\rho}_1=1 & \textrm{in } B_R\\
   \nu\cdot \tilde{j}= f &\textrm{on } \partial B_R\times(0,1) 
  \end{cases}
 \end{equation}
and
 \begin{equation}\label{io31intro}
  \int_{B_R}\int_0^1 \frac{1}{\tilde \rho}|\tilde{j}|^2-\int_{B_R}|\nabla \Phi|^2 \le \tau E +C D.
 \end{equation}
\end{lemma}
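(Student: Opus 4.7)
The plan is to construct $(\tilde\rho, \tilde j)$ as the sum of a ``harmonic backbone'' built from $\nabla\Phi$ and two small corrections: one matching the time-oscillation of the boundary flux $f$ around its mean $\bar f = \nu\cdot\nabla\Phi$, and one matching the initial/final densities $\mu$ and $1$. For the bulk of the time interval I would set
\[
\tilde j(t, x) = \nabla\Phi(x) + \nabla v(t, x),
\]
where $v(t, \cdot)$ solves the Neumann problem on $B_R$ with boundary data $f(t, \cdot) - \bar f(\cdot)$ and the spatially constant source forced by compatibility. Both the source and the boundary data have zero time-average, so $\int_0^1 \nabla v(t, \cdot)\,dt = 0$, which is the cancellation that will drive the estimate.

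Since $\nabla\cdot\tilde j$ is spatially constant at each $t$, the continuity equation determines $\tilde\rho$ up to the initial datum but can only match $\mu$ and $1$ if their difference is spatially constant. To absorb the genuine fluctuation $\mu - \kappa_\mu$, I would replace the construction on two thin slabs $[0, \delta]$ and $[1-\delta, 1]$ by the Benamou--Brenier geodesic internal to $B_R$ realizing $W^2_{B_R}(\mu, \kappa_\mu)$ (and the trivial uniform rescaling from $\kappa_\mu$ to $1$). These slabs can be arranged to carry no boundary flux and contribute at most $\les D/\delta \les D$ to the action when $\delta$ is of order $1$; on the bulk interval the Neumann condition $\nu\cdot\tilde j = f$ then holds by construction.

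Expanding
\[
\int_0^1\!\!\int_{B_R}\frac{|\tilde j|^2}{\tilde\rho} = \int_0^1\!\!\int_{B_R}\frac{|\nabla\Phi|^2}{\tilde\rho} + 2\int_0^1\!\!\int_{B_R}\frac{\nabla\Phi\cdot\nabla v}{\tilde\rho} + \int_0^1\!\!\int_{B_R}\frac{|\nabla v|^2}{\tilde\rho},
\]
the first term gives $\int_{B_R}|\nabla\Phi|^2$ plus an error $\int(1-\tilde\rho)|\nabla\Phi|^2$ that factors as a product of a $D^{1/2}$-type and an $(E+D)^{1/2}$-type quantity and is absorbable as $\tau E + CD$ by Young. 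The cross term vanishes at leading order thanks to $\int_0^1 \nabla v = 0$, leaving only a $\tilde\rho$-weighted higher-order correction of the same product form. The diagonal term $\int|\nabla v|^2$ is controlled by elliptic regularity in terms of $\int_0^1\int_{\partial B_R}|f - \bar f|^2$.

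The main obstacle is to sharpen the naive bound $\int_0^1\int_{\partial B_R}|f - \bar f|^2 \le \int_0^1\int_{\partial B_R} f^2 \les E + D$ to the desired $\tau E + CD$ (and similarly for the Young-type errors coming from the first two terms). This is where the good-radius selection $R \in (3, 4)$ has to be exploited much more deeply than in the orthogonality step, combined with the zero-mean cancellations $\int_0^1(f-\bar f)\,dt = 0$ and elliptic estimates that control each factor by the appropriate small quantity $D^{1/2}$ or $(E+D)^{1/2}$. A secondary technical point is ensuring that $\tilde\rho$ remains uniformly positive on $B_R\times(0,1)$ so that the Benamou--Brenier action is well defined; this should follow from $E+D\ll1$ together with $L^\infty$ bounds on $\nabla\Phi$, $\nabla v$, and on the displacement in the transport slabs via Lemma \ref{Linf}.
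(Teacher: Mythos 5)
Your decomposition $\tilde j = \nabla\Phi + \nabla v(t,\cdot)$ with $v$ solving a Neumann problem on all of $B_R$ is a natural first guess, but it cannot be made to work, and the obstacle you flag at the end of the proposal is the genuine gap. The diagonal term $\int_0^1\int_{B_R}|\nabla v|^2/\tilde\rho$ is comparable (since $\tilde\rho\approx 1$) to $\int_0^1\int_{B_R}|\nabla v|^2$, which by elliptic regularity is controlled by $\int_0^1\|f(t,\cdot)-\bar f\|_{H^{-1/2}(\partial B_R)}^2\,dt\les\int_0^1\int_{\partial B_R}|f-\bar f|^2$, and that quantity is generically of size $E+D$, not $\tau E + CD$. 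The zero-mean-in-time cancellation $\int_0^1\nabla v\,dt = 0$ only helps the cross term; it gives you nothing for the $L^2_t$ quantity $\int_0^1\int|f-\bar f|^2$, and the good-radius selection only serves to make $\int_0^1\int_{\partial B_R}f^2\les E+D$ -- it does not produce any additional smallness there. So there is no mechanism in your construction to gain the needed factor $\tau$.

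The missing idea is to localize the correction in \emph{space}, not just in time: the paper takes the flux to be exactly $\nabla\Phi$ on the shrunken ball $B_{R-r}$ and only corrects it on the thin annulus $B_R\setminus B_{R-r}$ of width $r$, building there a density-flux pair $(s,q)$ with $\nu\cdot q = f-\bar f$ on $\partial B_R$ and $\nu\cdot q=0$ on $\partial B_{R-r}$, satisfying the duality estimate $\int_0^1\int_{B_R\setminus B_{R-r}}|q|^2\les r\int_0^1\int_{\partial B_R}(f-\bar f)^2$ (which holds once $r\gg(\int_0^1\int_{\partial B_R}(f-\bar f)^2)^{1/(d+1)}$). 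With the correction confined to a slab of width $r$, both error contributions -- the Dirichlet energy of $\Phi$ over the annulus, $\les r\int_{\partial B_R}\bar f^2$ by elliptic regularity, and the energy of $q$ -- carry a prefactor $r$. Optimizing over $r$ then turns the bound $\les r(E+D)$ into $\les(E+D)^{(d+2)/(d+1)}$, and the superlinear exponent is what finally delivers $\tau E + CD$ once $E+D\ll1$. This spatial boundary-layer construction is the core of Lemma~\ref{Lconintro}, and it is not a refinement of your approach but a replacement for it.

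A secondary problem: your time slabs $[0,\delta]$ and $[1-\delta,1]$ are declared to carry no boundary flux, but \eqref{conteqloc} demands $\nu\cdot\tilde j = f$ on $\partial B_R\times(0,1)$ for \emph{all} $t$, and $f$ does not vanish on those slabs. The paper sidesteps this by reducing to the case $\mu=1$ on $B_R$ (paying $D/\tau$ once) while keeping the boundary-layer correction active on the whole time interval, so the flux constraint is met everywhere.
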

\begin{proof}[Sketch of proof]
 We may assume for simplicity that also $\mu=1$ in $B_R$. Indeed, otherwise we can  connect in the time interval $(0,\tau)$, the measure $\mu$  (in $B_R$) 
 to the constant density $1$ at a cost of order $\frac{1}{\tau} W_{B_R}^2(\mu,1)=\frac{1}{\tau}D$.\\  
Let $0<r\ll1$  be a small parameter to be chosen later on. We make the construction separately in the bulk $B_{R-r}\times(0,1)$ and in the boundary layer $B_R\backslash B_{R-r}\times (0,1)$ and set 
\[
 \tilde{\rho}=\begin{cases}
               1 & \textrm{in } B_{R-r}\times(0,1)\\
               1+s& \textrm{in } B_R\backslash B_{R-r}\times(0,1),
              \end{cases}
  \qquad  \tilde{j}=\begin{cases}
               \nabla \Phi & \textrm{in } B_{R-r}\times(0,1)\\
               \nabla \Phi+q& \textrm{in } B_R\backslash B_{R-r}\times(0,1),
              \end{cases}
\]
and require that $|s|\le 1/2$, $\partial_t s+\nabla \cdot q=0$ in $B_R\backslash B_{R-r}\times(0,1)$, $s_0=s_1=0$ in $B_R$ and  $\nu\cdot q=f-\bar f$ on $\partial B_R\times (0,1)$, so that \eqref{conteqloc} is satisfied.
 The existence of an admissible pair $(s,q)$ satisfying the energy bound
 \begin{equation}\label{energsq}
  \int_{B_R\backslash B_{R-r}}\int_0^1 |q|^2\les r\int_{\partial B_R} \int_0^1 (f-\bar f)^2
 \end{equation}
as long as $r\gg \lt(\int_{\partial B_R} \int_0^1( f-\bar f)^2\rt)^{1/(d+1)}$ is obtained arguing by duality, in the same spirit as \cite[Lem.3.3]{ACO} (see \cite[Lem. 2.4]{GO}). \\
We may now estimate
\begin{align*}
 \int_{B_R}\int_0^1 \frac{1}{\tilde \rho}|\tilde{j}|^2-\int_{B_R}|\nabla \Phi|^2& \le \int_{B_R\backslash B_{R-r}}\int_0^1 \frac{1}{1+s}|\nabla \Phi+q|^2\\
 &\les \int_{B_R\backslash B_{R-r}} |\nabla \Phi|^2 +\int_{B_R\backslash B_{R-r}}\int_0^1 |q|^2\\
 &\stackrel{\eqref{energsq}}{\les} r \int_{\partial B_R} \bar f^2 +r \int_{\partial B_R} \int_0^1 (f-\bar f)^2\les r \int_{\partial B_R} f^2,
\end{align*}
where we used that by elliptic regularity, $\int_{B_R\backslash B_{R-r}} |\nabla \Phi|^2\les r \int_{\partial B_R} \bar f^2$. Choosing $r$ to be a large multiple of $\lt(\int_{\partial B_R} \int_0^1( f-\bar f)^2\rt)^{1/(d+1)}$ yields
\[
 \int_{B_R}\int_0^1 \frac{1}{\tilde \rho}|\tilde{j}|^2-\int_{B_R}|\nabla \Phi|^2\les \lt(\int_{\partial B_R}\int_0^1 f^2\rt)^{\frac{d+2}{d+1}}\stackrel{\eqref{boundf}}{\les} (E+D)^{\frac{d+2}{d+1}},
\]
which concludes the proof of \eqref{io31intro} since $\frac{d+2}{d+1}>1$ and $E+D\ll1$.
 \end{proof}
 \begin{proof}[Proof of Theorem \ref{theo:harmonicEul}]
By (local) minimality of $(\rho,j)$, we have $\int_{B_R}\int_0^1 \frac{1}{\rho}|j|^2\le \int_{B_R}\int_0^1 \frac{1}{\tilde\rho}|\tilde j|^2$ so 
that combining \eqref{io15intro} and \eqref{io31intro} together gives the desired estimate \eqref{eq:maineulerian}.
\end{proof}
\section{Application to partial regularity}\label{sec:smallscale}
We now turn to applications of Theorem \ref{theo:harmonicLagintro} and start with a partial regularity result. Here we are interested in the behavior at small scales. \\
Let us first recall the  main regularity result for optimal transport maps due to Caffarelli \cite{CafAoM90bis,CafJAMS92}.
\begin{theorem}\label{theo:Caf}
 If $\mu$ and $\lambda$ have compact supports, are absolutely continuous with respect to the Lebesgue measure with densities bounded from above and below on their support
 and if $\supp \lambda$ is convex, then the optimal transport map $T$ from $\mu$ to $\lambda$ is $C^{0,\alpha}$.
\end{theorem}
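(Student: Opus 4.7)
The plan is to combine Caffarelli's classical observation that convexity of $\supp\lambda$ prevents singularities of the Brenier potential with the variational harmonic approximation of Theorem~\ref{theo:harmonicLagintro} to upgrade mere continuity of $T$ to $C^{0,\alpha}$ regularity. By Theorem~\ref{theo:brenier} write $T=\nabla\psi$ with $\psi$ convex and lower semicontinuous.

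\emph{Step 1 (continuity via convex target).} Suppose for contradiction that $\partial\psi(x_0)$ contains two distinct points $y_0\ne y_1$. Since the subdifferential of a convex function is convex, the whole segment $[y_0,y_1]$ lies in $\partial\psi(x_0)$; convexity of $\supp\lambda$ together with $\nabla\psi\#\mu=\lambda$ then forces $[y_0,y_1]\subset \supp\lambda$. The upper bound on $\lambda$ and the lower bound on $\mu$ combine, via the Knott-Smith inclusion $\supp\pi\subset\mathrm{Graph}(\partial\psi)$, into a contradiction on the measures of narrow tubes around $[y_0,y_1]$: the tube absorbs $\lambda$-mass of order its volume, but by monotonicity of $\partial\psi$ it can only be sourced from $\mu$-mass in an arbitrarily thin slab through $x_0$. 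Hence $\partial\psi$ is single-valued and $T=\nabla\psi$ is continuous on the interior of $\supp\mu$.

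\emph{Step 2 (smallness at small scales).} Fix an interior point $x_0$ of $\supp\mu$ and translate so that $T(x_0)=0$. Continuity of $T$ makes $E\to 0$ as the scale $R\to 0$. The two-sided density bounds together with Lebesgue differentiation yield $\kappa_\mu\to \mu(x_0)$ and $\kappa_\lambda\to \lambda(T(x_0))$ at a.e.\ point; after an innocuous linear rescaling of the target normalizing these limits to $1$, both Wasserstein contributions to $D$ vanish with $R$. Hence some scale $R_0>0$ satisfies $E+D\le\eps$ and Theorem~\ref{theo:harmonicLagintro} applies.

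\emph{Step 3 (Campanato iteration).} At each dyadic scale $R_k=\theta^k R_0$ with $\theta\ll 1$, Theorem~\ref{theo:harmonicLagintro} produces a harmonic approximant $\Phi_k$ obeying $\int|x-y+\nabla\Phi_k(x)|^2\,d\pi\le \tau E_k+CD_k$. Interior estimates for $\Phi_k$ allow one to subtract its affine Taylor expansion at $x_0$ and retilt the coordinates, yielding $E_{k+1}\le \tau E_k+CD_k$. With $\tau$ chosen small and $D_k$ decaying at a controlled rate inherited from the modulus of continuity of the densities, $E_k$ decays geometrically. A standard Campanato characterization then converts this into $T\in C^{0,\alpha}$ on a neighborhood of $x_0$.

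\emph{Main obstacle.} The decisive step is Step 1. Strict convexity of $\psi$ hinges on the convexity of $\supp\lambda$ in an essential way, and it is exactly where every extension to non-convex targets must degrade to a partial regularity statement, as in the Figalli-Kim theorem of Section~\ref{sec:smallscale}. Once continuity is in hand, Theorem~\ref{theo:harmonicLagintro} propagates linearized regularity very efficiently; but on its own, harmonic approximation cannot rule out singularities of $\nabla\psi$ without a geometric input on the target.
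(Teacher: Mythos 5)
The paper does not prove this statement at all: Theorem~\ref{theo:Caf} is quoted as background and attributed to Caffarelli's original papers \cite{CafAoM90bis,CafJAMS92}. So your write-up is necessarily an independent argument, and it should be judged on its own merits.

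There is a genuine gap in Steps 2--3 that concerns the hypotheses. Theorem~\ref{theo:Caf} assumes only that the densities are bounded above and below, whereas the Campanato iteration driven by Theorem~\ref{theo:harmonicLagintro} needs a \emph{quantified} decay of the data term $D_k$: the one-step estimate has the form $E_{k+1}\le \tau E_k + C D_k$, and this only yields a Hölder exponent if $D_k$ decays geometrically, which in turn requires a Hölder (or at least Dini) modulus of continuity for the densities. You acknowledge this yourself when you invoke ``a controlled rate inherited from the modulus of continuity of the densities'' in Step~3 --- but no such modulus is available under the stated hypotheses. With only bounded densities, Lebesgue differentiation gives $D_R\to 0$ at almost every point (your Step~2), but at an unquantified, point-dependent rate; this is enough for a.e.\ differentiability of $T$, not for a uniform $C^{0,\alpha}$ bound. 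The paper is explicit about this limitation: its variational proof of the Figalli--Kim theorem requires ``the additional hypothesis that $\mu$ and $\lambda$ are Hölder continuous.'' Caffarelli's $C^{1,\alpha}$-regularity of $\psi$ under mere $L^\infty$ bounds on the densities rests on a mechanism of an entirely different nature (strict convexity, sections of $\psi$ normalized by John's ellipsoid, and doubling of the Monge--Ampère measure), which the linearization-based harmonic approximation cannot replicate, precisely because the harmonic comparison problem \eqref{ma89} is blind to the small-scale oscillation of the densities encoded in $D$.

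A secondary issue is that Step~1, while pointing at the right geometric input (convexity of $\supp\lambda$ prevents the Brenier potential from having exposed singular points), is far from a proof. The contradiction you sketch --- ``$\lambda$-mass of a narrow tube around $[y_0,y_1]$ must be sourced from a thin slab through $x_0$'' --- does not follow directly from the Knott--Smith inclusion and monotonicity alone: a segment in $\partial\psi(x_0)$ is a Lebesgue-null set, so no mass balance on the segment itself is violated, and the preimage under $\partial\psi$ of a tube around the segment need not be a thin slab. Making this rigorous is the heart of Caffarelli's work (via the Alexandrov formulation of the Monge--Ampère measure and a localization argument ruling out interior extreme points of the contact set), and it cannot be compressed to the heuristic you give. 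Your closing remark correctly identifies Step~1 as the essential geometric input, but the proposed Steps~2--3 would not upgrade it to the stated conclusion under the stated assumptions.
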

The hypothesis that $\supp \lambda$ is convex is not merely technical. Indeed, considering for instance the optimal transport map between one ball and two disjoint balls, 
it is easy to construct examples where the optimal transport map is discontinuous. However, building on the ideas of Caffarelli to prove Theorem \ref{theo:Caf}, 
Figalli and Kim proved in \cite{FigKim} that even without the convexity assumption on $\supp \lambda$,  the singular set of $T$ cannot be too big (see also \cite{DePFig} for a generalization to arbitrary non-degenerate cost functions).
\begin{theorem}\label{theo:FigKim}
  Let  $\mu$ and $\lambda$ be probability measures with compact supports, both  absolutely continuous with respect to the Lebesgue measure with densities bounded from above and below on their support.
 Then, there exist open sets $\Omega\subset \supp \mu$ and $\Omega'\subset \supp \lambda$  with $|\supp \mu\backslash \Omega|=|\supp \lambda\backslash \Omega'|=0$ and such that 
 the optimal transport map $T$ from $\mu$ to $\lambda$ is a $C^{0,\alpha}$ homeomorphism between $\Omega$ and $\Omega'$.
\end{theorem}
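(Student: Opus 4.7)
The plan is to use Theorem \ref{theo:harmonicLagintro} as the engine of a De Giorgi--Campanato iteration, in exactly the spirit indicated in the introduction: harmonic approximation gives an ``excess improvement by tilting'' at one scale, which iterated at a full-measure set of good base points yields $C^{0,\alpha}$ regularity. I will focus on what needs to be adapted from the harmonic approximation input.

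First I would formulate a one-step decay estimate. Fix a base point $x_0$ and a ball $B_R(x_0)$, normalise so that $\mu$, $\lambda$ are close to a common constant (using the $L^\infty$ bounds on the densities together with suitable affine rescalings), and assume the normalized excess $E_R$ and data term $D_R$ (as in \eqref{E}--\eqref{D}, centered at $x_0$) are small. Theorem \ref{theo:harmonicLagintro} then produces a harmonic potential $\Phi$ and a radius $R'\in (3R/2,2R)$ such that after subtracting $\nabla\Phi$ from the displacement, the tilted excess on $B_R$ is bounded by $\tau E_R + C(\tau)D_R$. Because $\Phi$ is harmonic on $B_{R'}$, interior estimates give $|\nabla\Phi(x)-\nabla\Phi(x_0)-\nabla^2\Phi(x_0)(x-x_0)|\lesssim \theta^2 R\,(\int_{B_{R'}}|\nabla\Phi|^2)^{1/2}$ on $B_{\theta R}$. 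Absorbing the affine part $x_0+\nabla\Phi(x_0)+\nabla^2\Phi(x_0)(\cdot -x_0)$ into a new base change and using the elliptic bound \eqref{elliptic} controlled by $E_R+D_R$, I would obtain
\[
E_{\theta R}\;\le\;C\bigl(\theta^{2}+\theta^{-(d+2)}\tau\bigr)E_R\;+\;C\theta^{-(d+2)}D_R.
\]
Choosing $\theta$ small so that $C\theta^{2}\le \tfrac14\theta^{2\alpha}$ for some $\alpha\in(0,1)$, then $\tau$ small depending on $\theta$, gives a geometric decay $E_{\theta R}\le \tfrac12\theta^{2\alpha}E_R+C D_R$.

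Next I would iterate this estimate. At any Lebesgue point $x_0$ of the density of $\mu$ such that $T(x_0)$ is a Lebesgue point of the density of $\lambda$ (a full-measure condition by the Lebesgue differentiation theorem, since both densities are bounded above and below), one has $D_R\to 0$ as $R\to 0$, with an explicit rate once one fixes a modulus of continuity on a subset of positive measure. For $R_0$ small enough the smallness hypothesis $E_{R_0}+D_{R_0}\le\eps$ of Theorem \ref{theo:harmonicLagintro} is satisfied. Iterating the one-step decay over $R_k=\theta^k R_0$ with the bookkeeping of an evolving affine correction $A^{(k)}$ (the composition of all the affine Taylor expansions of the successive $\Phi$'s) gives $E_{R_k}\lesssim \theta^{2\alpha k}$. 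Summing the affine increments $A^{(k+1)}-A^{(k)}$ yields a limit affine map $A^{(\infty)}$ approximating $T$ at $x_0$ with the Campanato bound
\[
\int_{B_r(x_0)\times \R^d}|y-A^{(\infty)}(x)|^2\,d\pi\;\lesssim\;r^{d+2+2\alpha},
\]
which by Campanato's characterization (applied on the open set where this iteration works with uniform parameters) gives that $T$ is $C^{0,\alpha}$ there. The openness of the regular set $\Omega$ follows from the fact that the smallness condition at some scale $R_0$ is stable under small perturbations of the base point, and the analogous argument applied to the inverse transport map (which is also optimal by Theorem \ref{theo:brenier}) produces the corresponding open set $\Omega'$ on the target side; strict monotonicity of $\partial\psi$ on $\Omega$ then upgrades the map to a homeomorphism.

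The main obstacle is the simultaneous control of the three small parameters throughout the iteration: the tilting error $\tau$ (which must stay summable so that the cumulative affine corrections converge, and so that the rescaled measures remain $(\mu,\lambda)$-close to constants in the sense required by Theorem \ref{theo:harmonicLagintro}), the geometric factor $\theta^{-(d+2)}$ (which degrades the data term each step), and the \emph{a priori} unknown decay rate of $D_R$ (only guaranteed via Lebesgue differentiation, not by the iteration itself). Getting a decay $D_R\lesssim R^{2\beta}$ with $\beta>0$ uniform on a subset of positive measure requires a Lusin/Egorov-type argument to pass from qualitative Lebesgue continuity to a quantitative modulus on a set of nearly full measure; this is the truly delicate ingredient, and it is exactly what forces the conclusion to be \emph{partial} rather than global regularity.
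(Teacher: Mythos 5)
Your proposal reproduces the paper's Campanato iteration quite faithfully---the one-step decay estimate obtained from Theorem \ref{theo:harmonicLagintro} together with interior regularity of the harmonic $\Phi$, the bookkeeping of cumulative affine corrections, and the resulting Campanato bound are exactly what the paper does in its Proposition and the proof sketch of Theorem \ref{theo:epsreg}. The iteration step itself is handled correctly.

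However, there is a genuine gap at the entry point of the iteration. You assert ``For $R_0$ small enough the smallness hypothesis $E_{R_0}+D_{R_0}\le\eps$ of Theorem \ref{theo:harmonicLagintro} is satisfied,'' but Lebesgue points of the \emph{densities} of $\mu$ and $\lambda$ only control $D_R$; they say nothing about the excess
\[
E_R=\frac{1}{R^{d+2}}\int_{(B_{6R}(x_0)\times\R^d)\cup(\R^d\times B_{6R}(x_0))}|x-y|^2\,d\pi .
\]
Without a normalization that makes the map close to the identity near $x_0$, $E_R$ has no reason to become small as $R\to 0$: a generic Kantorovich potential is merely convex and $E_R$ stays of order one. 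The paper's argument explicitly invokes Alexandrov's theorem to get differentiability a.e.\ of $T=\nabla\psi$. At such a point $x_0$ one has $T(x)=T(x_0)+\nabla T(x_0)(x-x_0)+o(|x-x_0|)$ with $\nabla T(x_0)$ symmetric positive definite (by the two-sided density bounds); pre- and post-composing with the affine map built from $T(x_0)$ and $\nabla T(x_0)^{1/2}$ produces a new optimal transport map $\hat T$ between rescaled densities with $\hat T(x)=x+o(|x|)$, and it is \emph{this} affinely normalized excess that tends to zero, allowing Theorem \ref{theo:epsreg} to apply. This is why the paper's derivation explicitly goes through Alexandrov + an $\eps$-regularity theorem; the qualitative a.e.\ differentiability of $T$ (not merely of the densities) is what makes the iteration startable.

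A second, smaller discrepancy: you invest in a Lusin/Egorov argument to get a quantitative decay $D_R\lesssim R^{2\beta}$, but the paper sidesteps this entirely by proving the result for H\"older continuous densities (and, for exposition, for characteristic functions of open sets, which makes $D=0$). The data term $D$ then has the needed rate built in by assumption, and the ``partial'' nature of the regularity comes solely from the fact that the initial excess is only small near Alexandrov points, not from a Lusin-type selection on the densities.
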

Let us point out that it is actually conjectured that the singular set is much smaller and 
has the same structure as the singular set of gradients of convex functions i.e. that it is $n-1$-rectifiable (see \cite{KitMac} for a result in this direction).\\
A first application of Theorem \ref{theo:harmonicLagintro} is a new proof of Theorem \ref{theo:FigKim} (under the additional hypothesis that $\mu$ and $\lambda$ are H\"older continuous). 
For the sake of simplicity, we will assume from now on that $\mu=\chi_{\Omega_1}$ and $\lambda=\chi_{\Omega_2}$ for some bounded open sets $\Omega_i$ (so that in particular with the notation of Section \ref{sec:harm}, $D=0$). 
As in \cite{DePFig}, we derive Theorem \ref{theo:FigKim} combining Alexandrov's Theorem (see \cite{Viltop}), which state that $T$ is differentiable a.e., 
  with an $\eps-$regularity theorem.
  \begin{theorem}(\cite[Th.1.2]{GO})\label{theo:epsreg}
   Let $T$ be the optimal transport map from $\Omega_1$ to $\Omega_2$. For every $\alpha \in (0,1)$, there exists $\eps(\alpha)$ such that if $R>0$ is such that $B_{6R}\subset \Omega_1\cap \Omega_2$ and 
   \[
    \frac{1}{ R^{d+2}}\int_{B_{6R}}|T-x|^2\le \eps,
   \]
then $T\in C^{1,\alpha}(B_R)$.
  \end{theorem}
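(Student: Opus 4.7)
The plan is a Campanato iteration based on Theorem \ref{theo:harmonicLagintro}. Since $\mu = \chi_{\Omega_1}$, $\lambda = \chi_{\Omega_2}$ and $B_{6R}\subset\Omega_1\cap\Omega_2$, the distance-to-data $D$ of \eqref{D} vanishes and the hypothesis reduces to $E := \frac{1}{R^{d+2}}\int_{B_{6R}}|T-x|^2\le \eps$. I want to establish a one-step decay: after a suitable affine change of variables, the excess at scale $\theta R$ is bounded by $\theta^{2\alpha} E$ for a fixed small $\theta\in(0,1)$. Iterating this dyadic decay and invoking Campanato's characterization of $C^{1,\alpha}$ then yields the conclusion.

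For the one-step improvement, rescale to $R=1$ and apply Theorem \ref{theo:harmonicLagintro}: for $\tau$ small one obtains $\rho\in(3,4)$ and $\Phi$ with $\Delta\Phi = c$ on $B_\rho$ such that $\int_{B_1}|T(x)-x+\nabla\Phi(x)|^2\,dx\le\tau E$. Since $\nabla(\Phi-\tfrac{c}{2d}|x|^2)$ is harmonic on $B_\rho$, interior elliptic estimates combined with \eqref{elliptic} and \eqref{boundf} control $|\nabla\Phi(0)|$, $|D^2\Phi(0)|$ and $\sup_{B_\theta}|D^3\Phi|$ by $E^{1/2}$. Set $b:=-\nabla\Phi(0)$ and the symmetric matrix $A:=I-D^2\Phi(0)$ (close to $I$, so $A^{1/2}$ is well-defined). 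Taylor expansion then gives $|x-\nabla\Phi(x)-(Ax+b)|\lesssim|x|^2 E^{1/2}$ on $B_\theta$, and combining with the harmonic approximation yields
$$\frac{1}{\theta^{d+2}}\int_{B_\theta}|T(x)-Ax-b|^2\,dx\;\lesssim\;\theta^{-(d+2)}\tau E+\theta^2 E.$$

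To iterate, I pass to the rescaled map $\widetilde T(y):=\tfrac{1}{\theta}A^{-1/2}(T(\theta A^{-1/2}y)-b)$, which is the gradient of the convex potential $\widetilde\psi(y):=\tfrac{1}{\theta^2}\psi(\theta A^{-1/2}y)-\tfrac{1}{\theta}(A^{-1/2}b)\cdot y$ and the optimal transport between the rescaled domains $\tfrac{1}{\theta}A^{1/2}\Omega_1$ and $\tfrac{1}{\theta}A^{-1/2}(\Omega_2-b)$, both carrying a common constant density (the Jacobian factors cancel). Its excess at scale $1$ equals, up to a factor close to $1$, the displayed left-hand side. Choosing $\theta$ small so that $C\theta^2\le\tfrac12\theta^{2\alpha}$ and then $\tau=\tau(\theta)$ small so that $C\theta^{-(d+2)}\tau\le\tfrac12\theta^{2\alpha}$ gives $\widetilde E\le\theta^{2\alpha}E$. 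Iterating produces affine corrections with increments $\|A_{k+1}-A_k\|+|b_{k+1}-b_k|\lesssim\theta^{k\alpha}E^{1/2}$ (hence Cauchy), and the excess at the $k$-th scale around $0$ is bounded by $\theta^{2k\alpha}E$. Campanato's lemma then yields $T$ classically differentiable at $0$ with $|T(x)-A_\infty x-b_\infty|\lesssim|x|^{1+\alpha}E^{1/2}$, and running the same argument at every $x_0\in B_R$ --- the smallness at scale $\sim R/2$ around $x_0$ following from $B_{6r}(x_0)\subset B_{6R}(0)$ --- gives $T\in C^{1,\alpha}(B_R)$.

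The main obstacle I expect is the bookkeeping of the iteration. One must verify at every stage that the rescaled source and target still contain $B_6$ around the new origin, which requires the pointwise bound of Lemma \ref{Linf} (so that $T$ stays uniformly close to the identity and $\partial\Omega_2$ remains far away), together with the fact that the composed tilts remain uniformly close to $I$ throughout the iteration --- both following from summability of $\sum_k\theta^{k\alpha}$ once the initial $\eps$ is chosen small enough in terms of $\alpha$. A minor side point: with constant densities on $B_{6R}$, the $L^2(d\pi)$ bound \eqref{eq:mainestimateintro} coincides with the $L^2(dx)$ bound on $T(x)-x+\nabla\Phi(x)$ used above, so no averaging issue arises.
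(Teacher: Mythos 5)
Your proposal follows the same Campanato iteration strategy as the paper (apply Theorem~\ref{theo:harmonicLagintro} once, use interior regularity of the harmonic $\Phi$ to extract an affine correction with improved excess at scale $\theta$, normalize, iterate). However, there is one genuine gap in the normalization step, and it is precisely the point the paper is careful about.

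You set $A=I-D^2\Phi(0)$ and assert that ``both [rescaled domains carry] a common constant density (the Jacobian factors cancel).'' They do not. Pushing $\chi_{\Omega_1}\,dx$ forward by $y=\theta^{-1}A^{1/2}x$ gives constant density $\theta^d(\det A)^{-1/2}$, while pushing $\chi_{\Omega_2}\,dz$ forward by $w=\theta^{-1}A^{-1/2}(z-b)$ gives constant density $\theta^d(\det A)^{1/2}$; these agree only if $\det A=1$. Harmonicity of $\Phi$ makes $D^2\Phi(0)$ trace-free, which kills the linear term in $\det(I-D^2\Phi(0))=1-\mathrm{Tr}\,D^2\Phi(0)+O(|D^2\Phi(0)|^2)$, but the quadratic term survives, so $\det A=1+O(E)\ne1$. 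Consequently the rescaled pair no longer has $D=0$, contradicting the cleanliness you rely on in the iteration. The paper instead takes $B=\exp(-\nabla^2\Phi(0)/2)$, which is unimodular \emph{exactly}, not just to leading order, because $\det\exp(-\nabla^2\Phi(0)/2)=\exp(-\tfrac12\mathrm{Tr}\,\nabla^2\Phi(0))=1$; this is the reason for the exponential, and it preserves $D=0$ at every step of the iteration without any bookkeeping. Your version can be patched by tracking the resulting $D_k\sim E_k^2$ (it is of higher order, so the iteration still closes once $\eps$ is small enough), but the cancellation you invoke does not hold as stated and would need to be replaced by this extra estimate.

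A second, cosmetic slip: the estimate from Theorem~\ref{theo:harmonicLagintro} reads $\int_{B_1}|T-(x+\nabla\Phi)|^2\le\tau E$, i.e.\ $T\approx x+\nabla\Phi$, so the shift and the matrix correction should be $b=\nabla\Phi(0)$ and ``identity plus $D^2\Phi(0)$,'' not with minus signs. This does not affect the structure of the argument.
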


By scaling invariance, we may assume that $R=1$. As already alluded to the proof goes through a Campanato iteration. Indeed, by Campanato's characterization of $C^{1,\alpha}$ spaces (see  \cite{campanato}), it is enough to prove that for every $0<r\le \frac{1}{6}$,
\[
 \min_{A,b} \frac{1}{r^{d+2}} \int_{B_r} |T-(Ax+b)|^2\les  r^{2\alpha} \int_{B_{1}}|T-x|^2. 
\]
Defining 
\[E(T,R)= \frac{1}{ R^{d+2}}\int_{B_{6R}} |T-x|^2,\]
this is in turn obtained by using iteratively the following proposition.
\begin{proposition}
 For every $\alpha\in(0,1)$, there exist $\theta(\alpha)\in(0,1)$ and $\eps(\alpha)$ such that  if $B_{6R}\subset \Omega_1\cap \Omega_2$  and $E(T,R)\le \eps$, there exist a symmetric matrix $B$ with $\det B=1$ and a vector $b$  such that letting 
 $\hat{T}(x)=B(T(Bx)-b)$, 
 \[
  E(\hat{T},\theta R)\le \theta^{2\alpha} E(T,R)
 \]
and $\hat{T}$ is the optimal transport map between $\hat{\Omega}_1=B^{-1}\Omega_1$ and $\hat{\Omega}_2=B(\Omega_2-b)$.
\end{proposition}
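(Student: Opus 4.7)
My plan is to carry out one step of a Campanato iteration, using Theorem~\ref{theo:harmonicLagintro} as a substitute for Caffarelli's $C^{2,\alpha}$ estimate. By scale invariance I reduce to $R = 1$ and write $E := E(T,1) \le \eps$. Applying Theorem~\ref{theo:harmonicLagintro} with a small parameter $\tau > 0$ (to be fixed later) gives a radius $R' \in (3,4)$ and a potential $\Phi$ solving \eqref{ma89} such that
\[
  \int_{B_1} |T(x) - x + \nabla \Phi(x)|^2\, dx \le \tau E,
\]
since $D = 0$ because $\mu,\lambda \equiv 1$ on $B_6$. The same fact forces $\nabla \cdot \bar j = 0$ on $B_{R'}$, so that integrating $\Delta \Phi = c$ over $B_{R'}$ and using the Neumann data yields $c = 0$; thus $\Phi$ is actually harmonic. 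Combined with the elliptic estimate $\int_{B_{R'}} |\nabla \Phi|^2 \les E$, interior harmonic regularity delivers $\|\nabla^k \Phi\|_{L^\infty(B_2)} \les E^{1/2}$ for every $k \ge 1$.

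Next I define the correctors. Let $M := \nabla^2 \Phi(0)$, which is symmetric, trace-free by harmonicity, and of norm $\les E^{1/2}$. I set $b := \nabla \Phi(0)$ and
\[
  B := (\det(I+M))^{1/(2d)}\, (I+M)^{-1/2},
\]
which is well defined for $\eps$ small, symmetric, satisfies $\det B = 1$ and $\|B - I\| \les E^{1/2}$. Since $\mathrm{tr}\, M = 0$ forces $\det(I+M) = 1 + O(|M|^2)$, a direct calculation yields the key cancellation identity
\[
  B(I+M)B - I = \bigl((\det(I+M))^{1/d} - 1\bigr)\, I = O(E).
\]
Optimality of $\hat T$ between $\hat\Omega_1$ and $\hat\Omega_2$ follows from Brenier's theorem: writing $T = \nabla \psi$ with $\psi$ convex, one has $\hat T(\hat x) = \nabla_{\hat x}\bigl[\psi(B\hat x) - (Bb)\cdot\hat x\bigr]$, whose potential is convex since $B$ is symmetric, while the Jacobian identity $|\det \nabla \hat T| = |\det B|^2 = 1$ together with $\hat T(\hat\Omega_1) = \hat\Omega_2$ provides the required push-forward of $\chi_{\hat\Omega_1}$ onto $\chi_{\hat\Omega_2}$.

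For the excess improvement I Taylor expand $\nabla \Phi(x) = b + Mx + r(x)$ on $B_2$ with $|r(x)| \les E^{1/2} |x|^2$, and denote the harmonic-approximation error by $e(x) := T(x) - x - \nabla \Phi(x)$, so that $\int_{B_1} |e|^2 \le \tau E$. Substituting $x = B\hat x$ produces the decomposition
\[
  \hat T(\hat x) - \hat x = \bigl(B(I+M)B - I\bigr)\hat x + B\, r(B\hat x) + B\, e(B\hat x).
\]
The first term is $O(E)|\hat x|$, the second is pointwise $O(E^{1/2}\theta^2)$ on $B_{6\theta}$, and the third, after the change of variables $x = B\hat x$ (with $\det B = 1$ and $\|B\| \les 1$), integrates to $\les \int_{B_1} |e|^2 \le \tau E$. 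Summing the three contributions and dividing by $\theta^{d+2}$ gives
\[
  E(\hat T, \theta) \le C\bigl(E^2 + E\,\theta^2 + \tau E\, \theta^{-(d+2)}\bigr).
\]
I then choose parameters in the order $\theta \to \tau \to \eps$: first $\theta = \theta(\alpha) \in (0,1)$ small enough that $C\theta^{2-2\alpha} \le \tfrac13$ (possible since $\alpha < 1$), then $\tau \les \theta^{d+2+2\alpha}$ to force $C\tau\theta^{-(d+2)} \le \tfrac13 \theta^{2\alpha}$, and finally $\eps$ both below the threshold required by Theorem~\ref{theo:harmonicLagintro} at level $\tau$ and small enough that $C\eps \le \tfrac13 \theta^{2\alpha}$. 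The three contributions then combine to give $E(\hat T, \theta) \le \theta^{2\alpha} E$, as desired.

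The main obstacle is the scale mismatch in the last error term: the harmonic approximation only controls $T - (\mathrm{id} + \nabla \Phi)$ in $L^2(B_1)$, whereas the new excess is normalized by $\theta^{-(d+2)}$ on the much smaller ball $B_{6\theta}$. The factor $\tau E\,\theta^{-(d+2)}$ reflects precisely this mismatch, which is why the smallness of $\tau$ has to be dictated by the eventual choice of $\theta$, and not the reverse; this hierarchy is what makes the otherwise routine Campanato linearization actually close.
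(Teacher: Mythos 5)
Your proof is correct and follows essentially the same path as the paper's: reduce to $R=1$, invoke the harmonic approximation theorem with $D=0$ (so $c=0$ and $\Phi$ is harmonic), Taylor-expand $\nabla\Phi$ around $0$, choose $b=\nabla\Phi(0)$ and a symmetric unimodular $B$ linearizing the Hessian, and then run the three-term error estimate with the parameter hierarchy $\theta\to\tau\to\eps$. The only cosmetic difference is your choice $B=(\det(I+M))^{1/(2d)}(I+M)^{-1/2}$ in place of the paper's $B=\exp(-\nabla^2\Phi(0)/2)$; both are symmetric with $\det B=1$, satisfy $\|B-\mathrm{id}\|\lesssim E^{1/2}$, and produce the same $O(E^2)$ contribution after conjugation, so the argument closes identically.
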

\begin{proof}[Sketch of proof]
By scaling we may assume that $R=1$.\\
Let $\tau\ll \frac{\theta^{2\alpha}}{\theta^{d+2}}$ be fixed. Applying Theorem \ref{theo:harmonicLagintro}, we find the existence of a function $\Phi$ which is harmonic in $B_2$ (under our assumptions $c=0$ in \eqref{ma89}) and such that  (since $D=0$)
\begin{equation}\label{eq:onestep}
 \int_{B_1} |T-(x+\nabla \Phi)|^2\le \tau E(T,1)
\end{equation}
and (recall \eqref{elliptic} and \eqref{boundf})
\begin{equation}\label{eq:phistep}
 \int_{B_2} |\nabla \Phi|^2\les E(T,1).
\end{equation}
We then define $b=\nabla \Phi(0)$ and $B= \exp(-A/2)$ where $A=\nabla^2 \Phi(0)$. Since $\Phi$ is harmonic $\textrm{Tr} A=0$ and thus $\det B=1$. Notice that  if $T=\nabla \psi$ for some convex function $\psi$ (by Theorem \ref{theo:brenier}), then $\hat{T}=\nabla \hat \psi$ with 
$\hat{\psi}(x)=\psi(Bx)- b\cdot x$, which is also a convex function. Therefore $\hat{T}$ is the optimal transport map between $\hat{\Omega}_1$ and $\hat{\Omega}_2$.
 We may now estimate
 \begin{align*}
  E(\hat{T},\theta)&= \frac{1}{\theta^{d+2}}\int_{B_{6\theta}} |\hat{T}-x|^2\\
  &= \frac{1}{\theta^{d+2}}\int_{B^{-1}(B_{6\theta})} |B (T-b)- B^{-1}x|^2\\
  &\les \frac{1}{\theta^{d+2}}\int_{B_{7\theta}}| T-b- B^{-2} x|^2\\
  &\les \frac{1}{\theta^{d+2}}\int_{B_{1}}|T-(x+\nabla \Phi)|^2 +\frac{1}{\theta^{d+2}}\int_{B_{7\theta}} |\nabla \Phi -(Ax+b)|^2\\
  &\qquad +\frac{1}{\theta^{d+2}}\int_{B_{7\theta}} |(\exp A - \textrm{id} -A)x|^2\\
  &\stackrel{\eqref{eq:onestep}}{\les} \frac{\tau}{\theta^{d+2}} E(T,1) + \theta^2\sup_{B_{7\theta}} |\nabla^3 \Phi|^2 + |A|^4\stackrel{\eqref{eq:phistep}}{\les}\frac{\tau}{\theta^{d+2}} E(T,1)+ \theta^2 E(T,1) +E(T,1)^2.
 \end{align*}
This concludes the proof since we chose $\tau\ll \frac{\theta^{2\alpha}}{\theta^{d+2}}$ and since $E(T,1)\ll1$.
\end{proof}

\section{Application to the optimal matching problem}\label{sec:largescale}
We now present an application to the optimal matching problem. As opposed to the previous section, we are interested here at large scales.\\
Over the last thirty years, optimal matching problems have been the subject of intensive work. We refer for instance to the monograph \cite{Ta14}. 
One of the simplest example is the problem of matching the empirical measure of a Poisson point process  to the corresponding Lebesgue measure.
More specifically, we consider for $L\gg1$ a Poisson point process $\mu$ on the the torus $Q_L=[-L/2,L/2)^d\simeq(\R/L\Z)^d$ i.e. 
\[
 \mu=\sum_{i=1}^n \delta_{X_i}
\]
with $X_i$ iid random variables uniformly distributed in $Q_L$ and $n$ a random variable with Poisson distribution with parameter $L^d$. The problem is to estimate the random variable
\[
 \frac{1}{L^d} \Wper^2(\mu, \kappa),
\]
where $\Wper$ indicates the Wasserstein distance on the torus $Q_L$, and to understand the structure of the corresponding optimal transport plans. It is well-known since \cite{AKT84} that\footnote{We use the notation $\log$ for the natural logarithm.} 
\begin{equation}\label{AKT}
 \E\lt[\frac{1}{L^d} \Wper^2(\mu, \kappa)\rt]\sim\begin{cases}
                                              \log L & \textrm{if } d=2\\
                                              1 &\textrm{for } d\ge 3
                                             \end{cases}
\end{equation}
and thus $d=2$ is a critical dimension. Recently, Caracciolo and al. used the  ansatz that the optimal  displacement  should be well approximated by $\nabla \varphi_L$, where $\varphi_L$ solves the Poisson equation (recall \eqref{Poi})
\begin{equation}\label{eqphi}
\Delta \varphi_L=\mu-\kappa \qquad \textrm{in } Q_L
\end{equation}
to make numerous predictions about the optimal prefactor in \eqref{AKT} as well as the correlations (see \cite{CaLuPaSi14,CaSi15}). 
At the macroscopic scale, this ansatz has been partially rigorously justified by Ambrosio and al. (see \cite{AmStTr16,AGS19} and also \cite{ledoux} for a result about the fluctuations) in dimension $2$. To state their result\footnote{The results of \cite{AmStTr16,AGS19}
are stated on the unit cube with a (deterministic) number of points $n\to \infty$. However, their results may be easily transposed into our setting by scaling.}, let us introduce some notation. For $t>0$, denote 
the heat kernel on $Q_L$ by $P_t$ and let $\varphi_{L,t}=P_t\ast \varphi_L$, so that $\varphi_{L,t}$ solves
\[
\Delta \varphi_{L,t}=P_t\ast\mu-\kappa \qquad \textrm{in } Q_L.
\]
\begin{theorem}\label{theo:Ambrosio}
 Let $d=2$, then 
 \begin{equation}\label{estimW2Amb}
  \lim_{L\to \infty} \frac{1}{\log L} \E\lt[\frac{1}{L^2} \Wper^2(\mu, \kappa)\rt]=\frac{1}{2\pi}.
 \end{equation}
Moreover, if $\pi_L$ is the optimal transport plan between $\mu$ and $\kappa$, then setting $t_L=\log^4 L$, for $L\gg1$ there holds
\begin{equation}\label{estimmapAmb}
 \frac{1}{\log L}\E\lt[\frac{1}{L^2}\int_{Q_L\times Q_L} |y-x-\nabla \varphi_{L,t_L}|^2 d\pi_L\rt]\ll \lt(\frac{\log \log L}{\log L}\rt)^{1/2}.
\end{equation}

\end{theorem}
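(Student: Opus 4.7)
I would prove both parts of Theorem~\ref{theo:Ambrosio} in parallel by establishing a two-sided comparison of $\frac{1}{L^{2}}\E[\Wper^{2}(\mu,\kappa)]$ with $\frac{1}{L^{2}}\E[\int_{Q_L}|\nabla \varphi_{L,t_L}|^{2}]$ up to errors of order $o(\log L)$, evaluating the latter explicitly by Fourier analysis on the torus, and then using Theorem~\ref{theo:harmonicLagintro} at the macroscopic scale to upgrade the energy matching into the $L^{2}(\pi_L)$ bound \eqref{estimmapAmb}.

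\textbf{Fourier computation.} Using $\hat\varphi_L(k)=-\hat\mu(k)/|k_L|^{2}$ with $k_L=2\pi k/L$, together with the Poisson identity $\E|\hat\mu(k)|^{2}=\kappa L^{2}$ for $k\neq 0$, Parseval yields
\[
\E\Bigl[\int_{Q_L}|\nabla \varphi_{L,t}|^{2}\Bigr]=\kappa\sum_{k\in\Z^{2}\setminus\{0\}}\frac{e^{-2t|k_L|^{2}}}{|k_L|^{2}}\sim\frac{\kappa L^{2}}{2\pi}\log\Bigl(\frac{L}{\sqrt{t}}\Bigr),
\]
and with $t=t_L=\log^{4}L$ this gives the candidate value $\frac{L^{2}\log L}{2\pi}$.

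\textbf{Matching $\Wper^{2}$ to the Poisson energy.} For the lower bound I would use the pointwise inequality $|j|^{2}/\rho\ge 2\, j\cdot v-\rho|v|^{2}$ inside the Benamou-Brenier functional with the time-independent test field $v=\nabla \varphi_{L,t_L}$; combined with $\nabla\cdot\bar j=\mu-\kappa$ and an integration by parts this yields
\[
\Wper^{2}(\mu,\kappa)\ge -2\int_{Q_L}\varphi_{L,t_L}\,d(\mu-\kappa)-\int_{Q_L}\bar\rho\,|\nabla \varphi_{L,t_L}|^{2},
\]
and the expectation of both right-hand terms is asymptotic to $\frac{L^{2}\log L}{2\pi}$ once $\bar\rho$ is replaced by $1$ via a bound of the form $W^{2}_{B_R}(\bar\rho,1)\lesssim E+D$ in the spirit of Lemma~\ref{Lort}. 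For the upper bound I would build a Benamou-Brenier competitor taking $j_s\equiv\nabla \varphi_{L,t_L}$ with affine density $\rho_s=(1-s)P_{t_L}\ast\mu+s\kappa$, after an initial phase connecting $\mu$ to $P_{t_L}\ast\mu$. At the critical dimension $d=2$, the naive heat-flow cost for this phase diverges, so the initial step instead relies on a dyadic construction exploiting the local Poisson fluctuations to keep the prelude cost at order $o(L^{2}\log L)$.

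\textbf{From energies to the map estimate.} To pass from the asymptotic of $\Wper^{2}$ to \eqref{estimmapAmb} I would expand
\[
\int|y-x-\nabla \varphi_{L,t_L}(x)|^{2}d\pi_L=\int|y-x|^{2}d\pi_L-2\int(y-x)\cdot\nabla \varphi_{L,t_L}(x)\,d\pi_L+\int|\nabla \varphi_{L,t_L}|^{2}d\mu,
\]
check that each of the three summands has expectation asymptotic to $\frac{L^{2}\log L}{2\pi}$ (the third by a Campbell formula for $\sum_i|\nabla \varphi_{L,t_L}(X_i)|^{2}$), and then quantify the cancellation by applying a torus-adapted version of Theorem~\ref{theo:harmonicLagintro} at scale $R=L$, whose Neumann potential $\Phi$ is identified with $\varphi_{L,t_L}$ modulo heat regularization. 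The bound $\tau E+CD$ from that theorem is optimized in $\tau$ as a function of $L$ to produce the $(\log\log L/\log L)^{1/2}$ gain. The hardest step will be that at the macroscopic scale one has $E+D$ only of order $(\log L)/L^{2}$, which is barely small; the harmonic approximation must therefore be applied on a high-probability event furnished by concentration of the Poisson process, and balancing the regularization scale $t_L=\log^{4}L$ against the tolerance $\tau$ in Theorem~\ref{theo:harmonicLagintro} is the delicate quantitative point of the argument.
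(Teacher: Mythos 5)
This theorem is not proved in the paper: it is quoted verbatim from Ambrosio--Stra--Trevisan \cite{AmStTr16} and Ambrosio--Glaudo--Stra \cite{AGS19}, so there is no ``paper's own proof'' to compare against. The paper's novel contribution is Theorem \ref{theo:local}, which takes Theorem \ref{theo:Ambrosio} as an external input (via the stochastic argument ensuring \eqref{hypdata}) and pushes the comparison down to the microscopic scale. Your proposal is therefore a reconstruction of the AST/AGS results rather than an alternative to anything in this paper; still, it is worth checking whether the sketch holds up.

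Your Fourier computation of $\E\big[\int_{Q_L}|\nabla\varphi_{L,t}|^2\big]$ is correct, and the lower bound on $\Wper^2$ via testing Benamou--Brenier against $\nabla\varphi_{L,t}$ is in the right spirit. However there are two genuine gaps. First, for the upper bound, the prelude connecting $\mu$ to $P_{t_L}\ast\mu$ with $t_L=\log^4L$ has cost
\begin{equation*}
\Wper^2(\mu,P_{t_L}\ast\mu)\lesssim L^2\, t_L = L^2\log^4 L,
\end{equation*}
which is larger than the target $\frac{L^2\log L}{2\pi}$ by a factor $\log^3 L$, not $o(L^2\log L)$ as you need. The regularization time $t_L=\log^4L$ appearing in \eqref{estimmapAmb} is \emph{not} the scale at which AST build their competitor; their upper bound uses a much smaller smoothing time ($t$ of order $1$ or $\log\log L$), and the larger $t_L$ in \eqref{estimmapAmb} serves a separate, probabilistic purpose. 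Your ``dyadic construction exploiting local Poisson fluctuations'' is exactly the missing idea, but as stated it is a placeholder rather than an argument, and this is where the real work of \cite{AmStTr16} is. Second, and more seriously, your plan to obtain \eqref{estimmapAmb} from a single application of Theorem \ref{theo:harmonicLagintro} at scale $R=L$ cannot deliver the required $o(\log L)$ rate. At that scale both $E$ and $D$ are of order $(\log L)/L^2$, so the conclusion $\tau E+C(\tau)D$ gives, after rescaling, a bound of order $(\tau + C(\tau))\,L^2\log L$; since $D$ is not multiplied by $\tau$ and $C(\tau)\to\infty$ as $\tau\to 0$, optimizing in $\tau$ yields only a constant, never the vanishing factor $(\log\log L/\log L)^{1/2}$. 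The AGS proof of \eqref{estimmapAmb} instead relies on a stability estimate for optimal maps of the form ``if $\nabla f$ (nearly) pushes $\mu$ to $\kappa$, then $\int|T-\nabla f|^2d\mu \le \int|\nabla f-x|^2d\mu - \Wper^2(\mu,\kappa)+\text{error}$,'' exploiting the convexity of the Kantorovich potential; the quantitative gain then comes from the two-sided asymptotics you established in the first part, not from the harmonic approximation machinery of this paper. A smaller but real issue: the third term in your final expansion, $\int|\nabla\varphi_{L,t_L}|^2d\mu$, is not amenable to a naive Campbell formula, since $\varphi_{L,t_L}$ is itself a functional of $\mu$; one must use the Mecke formula (or condition appropriately) to handle the correlation, which changes the computation.
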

Since by \eqref{estimW2Amb}, the displacement $y-x$ is on average of the order of $\log^{\frac{1}{2}} L$, \eqref{estimmapAmb} shows that 
$\nabla \varphi_{L,t_L}$ indeed coincides with the displacement to leading order. This leaves open the description of the optimal transport plan $\pi_L$ at the microscopic scale.
To state our main result, fix a smooth cut-off function  (which plays a similar role as the heat kernel in Theorem \ref{theo:Ambrosio})
\[
 \eta\in C^\infty_c(B_1) \textrm{ with } \int_{\R^2} \eta=1, \qquad \textrm{and set }\qquad  \eta_R=\frac{1}{R^2}\eta\lt(\frac{\cdot}{R}\rt).
\]
In \cite{GHOinprep}, we prove the following result (see also \cite[Th. 1.2]{GHO1} and \cite[Th. 1.1]{GHO2}):
\begin{theorem}\label{theo:local}
 There exists a stationary random variable $r_*\ge 1$ on $Q_L$ with exponential moments such that if $\bar x\in Q_L$ is such that $r_*(\bar x)\ll L$, then 
 \begin{equation}\label{weakestim}
  \lt|\frac{\int_{Q_L\times Q_L} \eta_R(x-\bar x) (y-x)d\pi}{\int_{Q_L\times Q_L} \eta_R(x-\bar x) d\pi}-\eta_R\ast \nabla \varphi_L(\bar x)\rt|\les \frac{\log R}{R} \qquad \forall L\ges R\ges r_*(\bar x).
 \end{equation}
Moreover, there exists   $R=R(\bar x)\sim r_*(\bar x)$ such that defining the shift $h$ by $h(\bar x)=\frac{1}{|B_R|}\int_{\partial B_R(\bar x)} (x-\bar x)\nu\cdot \nabla \varphi_L$, we have 
\begin{equation}\label{optimh}
 |h(\bar x)|^2\les \log L
\end{equation}
and 
\begin{equation}\label{strongestim}\sup\{ |y-x-h(\bar x)| \ : (x,y)\in \supp \pi\cap (B_{r_*}(\bar x)\times \R^2)\}\les  r_*(\bar x) \lt(\frac{ \log r_*(\bar x)}{r_*^2(\bar x)}\rt)^{1/4}.\end{equation}  
\end{theorem}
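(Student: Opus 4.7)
The plan is to apply the harmonic approximation Theorem \ref{theo:harmonicLagintro} iteratively along a dyadic family of balls $B_{2^k}(\bar x)$ from the macroscopic scale $k\simeq\log_2 L$ down to a minimal random scale $r_*(\bar x)$ below which the stochastic error $D$ ceases to be small enough to run one more step. The mechanism parallels the Campanato iteration of Section \ref{sec:smallscale}, but with two modifications: the data term $D$ is no longer zero, instead it is controlled stochastically via the Poisson structure of $\mu$; and the iteration is halted at a random intermediate scale rather than pushed to the origin. An extra step compared to Section \ref{sec:smallscale} is the identification of the iteratively extracted harmonic correctors with (local averages of) $\nabla\varphi_L$.

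I would define $r_*(\bar x)$ as the smallest dyadic radius $R\geq 1$ such that the critical bound $E(\rho)+D(\rho)\leq c\log(\rho)/\rho^2$ holds at $\bar x$ for every dyadic $\rho\in[R,L/C]$, with $c$ small enough to sit below the threshold $\eps$ of Theorem \ref{theo:harmonicLagintro}. Stationarity is immediate from stationarity of the Poisson process. The exponential moments for $r_*$ rest on three ingredients, which I view as the main technical obstacle: (i) the critical 2D local Wasserstein bound $\E[W^2_{B_\rho}(\mu,\kappa)]\les\rho^2\log\rho$ together with subgaussian concentration, uniformly in $\rho$; (ii) a comparable control on the excess $E(\rho)$, obtained by combining a Wasserstein estimate at an enveloping scale with the $L^\infty$ estimate of Lemma \ref{Linf}; (iii) a union bound over dyadic scales, whose cost is only geometric thanks to the exponential tails in (i)--(ii).

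For each dyadic scale $\rho\in[r_*,cL]$, Theorem \ref{theo:harmonicLagintro} yields a harmonic potential $\Phi_\rho$ on $B_\rho(\bar x)$ whose Neumann data on $\partial B_\rho$ is essentially the normal trace of the time-averaged flux $\bar j$. Since $\bar j$ is the integrated flux between $\mu$ and $\kappa$, its normal trace coincides with that of $\nabla\varphi_L$ up to particles crossing $\partial B_\rho$, which are controlled by $E+D$ via Lemma \ref{Linf}. Elliptic comparison then gives $\nabla\Phi_\rho(\bar x)\approx\eta_\rho\ast\nabla\varphi_L(\bar x)$ modulo errors summable across scales. Taking the first moment against $\eta_R(x-\bar x)\,d\pi$ and telescoping over dyadic scales $\rho\geq R$ produces \eqref{weakestim}, with the $(\log R)/R$ rate dictated by the critical 2D scaling $E+D\sim\log\rho/\rho^2$ once the accumulation of errors across scales is accounted for.

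Finally, for the pointwise estimate \eqref{strongestim} I would apply Lemma \ref{Linf} at the minimal scale $R=r_*(\bar x)$ to the shifted coupling $(x,y)\mapsto(x,y-h(\bar x))$, where $h(\bar x)$ is the Neumann-average displacement predicted by $\varphi_L$ as in the statement. By construction of $r_*$ one has $E(r_*)+D(r_*)\les\log(r_*)/r_*^2$, hence Lemma \ref{Linf} in dimension $d=2$ produces
\begin{equation*}
|y-x-h(\bar x)|\les r_*\bigl(E(r_*)+D(r_*)\bigr)^{1/4}\les r_*^{1/2}(\log r_*)^{1/4},
\end{equation*}
which is precisely the rate claimed in \eqref{strongestim}. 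The bound \eqref{optimh} on $|h(\bar x)|$ follows from the pointwise identification of $\nabla\varphi_L$ as the leading-order displacement provided by Theorem \ref{theo:Ambrosio}, together with the global estimate $\E[|\nabla\varphi_L|^2]\simeq\log L$ inherited from \eqref{AKT}.
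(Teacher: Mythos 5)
Your proposal captures the right skeleton: a Campanato iteration down a dyadic cascade of scales using Theorem~\ref{theo:harmonicLagintro}, stopping at a random minimal scale $r_*$ below which the data term fails to be small; the $L^\infty$ bound of Lemma~\ref{Linf} at scale $r_*$ applied to the coupling shifted by $h$ to get \eqref{strongestim}; and concentration for the Poisson process for the stochastic input. This matches the paper, which factors the argument through the deterministic Theorem~\ref{theo:deter} and then verifies its hypothesis \eqref{hypdata} probabilistically.

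There are, however, two genuine gaps. First, your definition of $r_*$ in terms of $E(\rho)+D(\rho)$ is problematic: $E$ depends on the optimal transport plan $\pi$, which is a global object, so neither stationarity nor exponential moments for $r_*$ follow directly from the Poisson structure of $\mu$. Your remedy in item (ii) -- estimating $E(\rho)$ from a Wasserstein bound at an enveloping scale plus Lemma~\ref{Linf} -- does not work, because the $L^\infty$ bound takes smallness of $E$ as an \emph{input}; it cannot produce control of $E$ at one scale from purely local information about $\mu$. The paper resolves this by letting the stochastic part of the argument only concern $D$ (hypothesis \eqref{hypdata}, which is a local functional of $\mu$), and then establishing $E(\rho)\les\log\rho/\rho^2$ for all intermediate scales \emph{deterministically}, by running the Campanato iteration downward from the macroscopic scale $L$ (where the starting estimate on $E$ comes from the global bound \eqref{estimW2Amb}).

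Second, and more importantly, your argument for the bound \eqref{optimh} does not close. You invoke Theorem~\ref{theo:Ambrosio} and $\E[|\nabla\varphi_L|^2]\simeq\log L$, but these are $L^2$-in-expectation statements and do not directly give a deterministic pointwise bound on the particular spherical average $h(\bar x)$. The subtlety, which the paper explicitly flags, is that the naively accumulated shift $\tilde h=\sum_k\nabla\Phi_k(0)$ coming out of the Campanato iteration only satisfies $|\tilde h|^2\les\log^3 L$ (each of the $\sim\log L$ scales contributes $|\nabla\Phi_k(0)|^2\les\log R_k$ via \eqref{estimphik}). Achieving the sharp rate \eqref{optimh} requires observing cancellations between consecutive correctors, which is done by comparing $\tilde h$ to the spherical average $h$ of $\nabla\varphi_L$ and proving the additional estimate $|h-\tilde h|\les\log r/r$; this same comparison is also what underlies the identification needed in \eqref{weakestim}. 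Your telescoping argument gestures in the right direction but omits the mechanism that actually produces the factor $\log L$ rather than $\log^3 L$.
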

With respect to \eqref{estimmapAmb}, \eqref{strongestim} proves that (circular) averages of $\nabla \varphi_L$ coincide with the displacement $y-x$ up to an error which is of order one.
Moreover, \eqref{weakestim} shows that after averaging, the displacement is actually extremely close to  averages of $\nabla \varphi_L$ (notice that the error term $\log R/R$ improves as $R$ increases).\\

By stationarity, it is enough to prove Theorem \ref{theo:local} for $\bar x=0$. 
The proof  is based on the following deterministic result (which is a small post-processing of \cite[Th. 1.2]{GHO1}):
\begin{theorem}\label{theo:deter}
 Let $\mu$ be a measure on $Q_L$. If for some  $L\gg r\gg1$, 
 \begin{equation}\label{hypdata}
  \frac{1}{R^2} W_{B_R}^2(\mu,\kappa)\les \log R \qquad \textrm{ for all dyadic } L\ges R\ges r,
 \end{equation}
then 
\begin{equation}\label{weakestimdeter}
  \lt|\frac{\int_{Q_L\times Q_L} \eta_R(x) (y-x)d\pi}{\int_{Q_L\times Q_L} \eta_R(x) d\pi}-\int_{Q_L} \eta_R \nabla \varphi_L\rt|\les \frac{\log R}{R} \qquad \forall L\ges R\ges r.
 \end{equation}
Moreover, there exists   $R\sim r$ such that letting $h=\frac{1}{|B_R|}\int_{\partial B_R} x\nu\cdot \nabla \varphi_L$, we have 
\begin{equation}\label{strongestimdeter}
\frac{1}{r^2}\int_{(B_{r}\times\R^2)\cup(\R^2\times B_{r}(h))} |y-x-h|^2 d\pi\les \log r.
\end{equation}  
\end{theorem}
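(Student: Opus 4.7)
I plan to obtain Theorem \ref{theo:deter} as a deterministic post-processing of the harmonic approximation Theorem \ref{theo:harmonicLagintro}, via a Campanato-type iteration at the dyadic scales $R_k = 2^k r$, $0 \le k \le N := \lfloor \log_2(L/r) \rfloor$. The hypothesis \eqref{hypdata} yields $D_{R_k} \lesssim \log R_k / R_k^2$ uniformly in $k$. A one-step excess-improvement-by-tilting argument in the spirit of the proof of Theorem \ref{theo:epsreg} (but played here going from the large scale $L$ down to $r$ rather than from $1$ down to $0$) then propagates this bound to the excess as well, so that $E_{R_k} \lesssim \log R_k / R_k^2$ at every scale. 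Theorem \ref{theo:harmonicLagintro} thus applies at every $R_k$, producing a good radius $\rho_k \in (3R_k, 4R_k)$ and a harmonic vector field $\nabla \Phi_k$ on $B_{\rho_k}$ with $\Delta \Phi_k = c_k$ and $\nu \cdot \nabla \Phi_k = \nu \cdot \bar j$ on $\partial B_{\rho_k}$.

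For the weak estimate \eqref{weakestimdeter} at a fixed dyadic scale $R$, I first observe that each component of $\nabla \Phi_R$ is harmonic on $B_{\rho_R} \supseteq \supp \eta_R$; hence, by the mean-value property applied to the radial bump $\eta_R$ with $\int \eta_R = 1$,
$$
\int \eta_R \, \nabla \Phi_R = \nabla \Phi_R(0).
$$
Combining with the Lagrangian $L^2$ bound \eqref{eq:mainestimateintro} and Cauchy--Schwarz gives $\int \eta_R(y-x)\, d\pi \approx \nabla \Phi_R(0)$. It then remains to identify $\nabla \Phi_R(0)$ with $\int \eta_R \nabla \varphi_L$. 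I would decompose $\varphi_L - \Phi_R$ on $B_{\rho_R}$ into the part with zero Neumann data (whose Laplacian is $(\mu - \kappa) - c_R$, controlled in $H^{-1}$ by $W_{B_{\rho_R}}(\mu,\kappa_\mu) \lesssim R\sqrt{\log R}$) and a harmonic part carrying the boundary mismatch $\nu \cdot (\nabla \varphi_L - \bar j)$. Integration by parts against $\eta_R$ shifts derivatives onto $\eta_R$, and a careful accounting of the two pieces should produce the sharp rate $\log R / R$.

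For the strong estimate \eqref{strongestimdeter}, I keep track along the iteration of the affine part $b_k := \nabla \Phi_k(0)$ and the trace-free Hessian $A_k := \nabla^2 \Phi_k(0)$ at the origin. Summing the contributions of $b_k$ from the top scale $L$ down to $r$ produces the shift $h$; its identification with $|B_R|^{-1} \int_{\partial B_R} x\, \nu \cdot \nabla \varphi_L$ for some $R \sim r$ follows from the divergence theorem applied to $x_i \nu_j \partial_j \varphi_L$, which couples the boundary Neumann data of $\Phi_R$ to that of $\varphi_L$ and recovers the average of $\nabla \varphi_L$ on $B_R$ up to a first-moment correction from $\mu - \kappa$. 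Once $h$ is subtracted from the displacement, the residual excess at scale $r$ reduces to $D_r \lesssim \log r / r^2$, yielding \eqref{strongestimdeter}.

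The main obstacle I anticipate is the sharp rate $\log R / R$ in the weak estimate: a naive Cauchy--Schwarz on the harmonic $L^2$-approximation error produces only $\sqrt{\log R}$, which is too weak by a factor $R / \sqrt{\log R}$. The extra $R^{-1}$ gain must be extracted from the mean-value reduction $\int \eta_R \nabla \Phi_R = \nabla \Phi_R(0)$ combined with the Helmholtz decomposition of $\bar j - \nabla \varphi_L$ (which is divergence-free on $B_{\rho_R}$), integrated by parts against the compactly-supported, radial $\eta_R$. This matching of local potentials across scales is the most delicate point and is presumably the place where the full machinery of \cite[Th.~1.2]{GHO1} must be invoked.
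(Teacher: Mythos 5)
Your high-level scaffolding matches the paper's: a Campanato iteration at dyadic scales from $L$ down to $r$, fed at each step by the harmonic approximation Theorem \ref{theo:harmonicLagintro}, with translations $b_k=\nabla\Phi_k(0)$ re-centering the coupling at each level. The propagation of $E_{R_k}\les \log R_k/R_k^2$ from the hypothesis \eqref{hypdata} is also the right mechanism, and \eqref{strongestimdeter} does reduce to $D_r$ once the correct shift is subtracted. One minor misstep: you propose tracking the trace-free Hessian $A_k=\nabla^2\Phi_k(0)$, but the paper's iteration for the matching problem is translation-only ($\pi_k=(\textrm{id},\textrm{id}-\nabla\Phi_{k-1}(0))\#\pi_{k-1}$) --- tilting is unnecessary here because the excess is only required to stay bounded by $\log R_k/R_k^2$, not to decay, and introducing $B_k=\exp(-A_k/2)$ would spoil the fact that each $\pi_k$ remains an optimal coupling to Lebesgue by translation invariance.

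The genuine gap is exactly where you flag it, but it is larger than you suggest and you do not close it. Your sum $\tilde h=\sum_k b_k$ is \emph{not} equal to the boundary average $h=|B_R|^{-1}\int_{\partial B_R}x\,\nu\cdot\nabla\varphi_L$, and the naive bound $|b_k|^2\les\log R_k$ from \eqref{estimphik} only yields $|\tilde h|^2\les\log^3 L$, far worse than the claimed $|h|^2\les\log L$. The paper's central deterministic input, \cite[Prop.~1.10]{GHO1}, is the cancellation estimate $|h-\tilde h|\les\log r/r$, which simultaneously rescues \eqref{strongestimdeter} and furnishes the extra factor of $1/R$ needed in the weak estimate \eqref{weakestimdeter}. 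Nothing in your ``divergence theorem applied to $x_i\nu_j\partial_j\varphi_L$'' sketch, nor in the Helmholtz decomposition of $\bar j-\nabla\varphi_L$ against $\eta_R$, produces this cancellation --- and you correctly compute that naive Cauchy--Schwarz stalls at $\sqrt{\log R}$. Falling back on ``the full machinery of \cite[Th.~1.2]{GHO1}'' is circular, since Theorem \ref{theo:deter} is precisely the announced post-processing of that result; what you would actually need to supply is the cross-scale telescoping argument behind $|h-\tilde h|\les\log r/r$.
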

Notice that \eqref{strongestim} follows from \eqref{strongestimdeter} and the $L^\infty$ bound \eqref{wg31} of Lemma \ref{Linf}. In order to obtain Theorem \ref{theo:local}, Theorem \ref{theo:deter} is combined with a stochastic argument based on \eqref{estimW2Amb} and a concentration-of-measure argument which ensures that \eqref{hypdata} is satisfied for the Poisson point process $\mu$. \\
The main ingredient for the proof of Theorem \ref{theo:deter} is a Campanato iteration scheme similar to
the one leading to Theorem \ref{theo:epsreg} (and mainly based on Theorem \ref{theo:harmonicLagintro}) which allows to transfer the information that \eqref{strongestimdeter}
holds at scale $L$ by \eqref{hypdata} down to the microscopic scale $r$. This is inspired by the approach developed by Armstrong and Smart   in \cite{MR3481355} (and further refined in 
\cite{arXiv:1409.2678}, see also \cite{AKMbook}) for quantitative stochastic homogenization. The main ideas of \cite{MR3481355} take roots themselves in previous works of Avellaneda and Lin (see \cite{MR0910954}) on periodic homogenization. The outcome of the Campanato scheme may be stated as follows (see \cite[Prop. 1.9]{GHO1})
\begin{proposition}
 There exists a sequence of approximately geometric radii $R_k$ i.e. $L\ge R_0\ge \cdots\ge R_K\ges 1$ with $R_{k-1}\ge 2R_k\ges R_{k-1}$, $R_0\sim L$ and $R_K\sim r$ such that defining recursively  the couplings $\pi_k$ by $\pi_0=\pi$ and 
 \[
  \pi_k=(\textrm{id},\textrm{id}-\nabla \Phi_{k-1}(0))\#\pi_{k-1}
 \]
where $\Phi_k$ solves
\[
  \Delta \Phi_k=c \ \textrm{ in } B_{R_k} \qquad \textrm{ and } \qquad \nu \cdot \nabla \Phi_k=\nu\cdot \bar j_k \ \textrm{ on } \partial B_{R_k}
\]
with $j_k$ defined as in \eqref{defj} with $\pi_{k}$ playing the role of $\pi$, we have for $k\in[0,K]$,
\begin{equation}\label{estimEk}
 \frac{1}{R_k^2}\int_{(B_{6R_k}\times \R^2)\cup(\R^2\times B_{6R_k})} |x-y|^2 d\pi_k\les \log R_k
\end{equation}
and 
\begin{equation}\label{estimphik}
 |\nabla \Phi_k(0)|^2\les \log R_k.
\end{equation}

\end{proposition}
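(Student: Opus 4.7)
The natural approach is a Campanato-type iteration, analogous to the one in Theorem \ref{theo:epsreg} but adapted to propagate a logarithmic excess bound rather than produce geometric decay. I would argue by induction on $k$, carrying both \eqref{estimEk} and \eqref{estimphik} simultaneously. The base case $k=0$ takes $R_0\sim L$ and $\pi_0=\pi$: \eqref{estimEk} is then exactly the assumption \eqref{hypdata} at scale $L$, and \eqref{estimphik} follows from the mean-value bound $|\nabla\Phi_0(0)|^2\les R_0^{-d}\|\nabla\Phi_0\|_{L^2(B_{R_0})}^2$ combined with the elliptic estimate $\|\nabla\Phi_0\|_{L^2(B_{R_0})}^2\les R_0^{d+2}(E_0+D_0)$ used in the proof of Theorem \ref{theo:harmonicLagintro}.

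For the inductive step, assume \eqref{estimEk}--\eqref{estimphik} at level $k-1$. The smallness $E_{k-1}+D_{k-1}\les \log R_{k-1}/R_{k-1}^2\ll 1$ needed to apply Theorem \ref{theo:harmonicLagintro} is granted by $R_{k-1}\ges r\gg 1$. I apply the theorem at a base scale of order $R_{k-1}$, chosen so that the good radius it produces lies in $(3R_{k-1}/8,R_{k-1}/2)$, and set $R_k$ to be this good radius. The theorem then delivers $\Phi_{k-1}$ harmonic in $B_{R_k}$ with
\[
  \int_{B_{R_k}}|y-x-\nabla\Phi_{k-1}(x)|^2\,d\pi_{k-1}\les R_{k-1}^{d+2}\bigl(\tau E_{k-1}+C(\tau)D_{k-1}\bigr).
\]
The definition $\pi_k=(\mathrm{id},\mathrm{id}-\nabla\Phi_{k-1}(0))\#\pi_{k-1}$ and the splitting $y-x-\nabla\Phi_{k-1}(0)=\lt(y-x-\nabla\Phi_{k-1}(x)\rt)+\lt(\nabla\Phi_{k-1}(x)-\nabla\Phi_{k-1}(0)\rt)$ reduce the bound on the LHS of \eqref{estimEk} at level $k$ to two pieces. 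The first, coming from the displayed harmonic approximation estimate, contributes $(R_{k-1}/R_k)^2\bigl(\tau\mathcal{E}_{k-1}+C(\tau)\mathcal{D}_{k-1}\bigr)$ after dividing by $R_k^2$, where $\mathcal{E}_k,\mathcal{D}_k$ denote the LHS of \eqref{estimEk} and of \eqref{hypdata}. The second, a Taylor-remainder term, is controlled by $|\nabla\Phi_{k-1}(x)-\nabla\Phi_{k-1}(0)|\les R_k\sup_{B_{R_k}}|\nabla^2\Phi_{k-1}|$ together with the interior estimate $\sup_{B_{R_k}}|\nabla^2\Phi_{k-1}|^2\les (\mathcal{E}_{k-1}+\mathcal{D}_{k-1})/R_{k-1}^2$; integrating over $B_{6R_k}$ and dividing by $R_k^2$ yields a contribution of order $(R_k/R_{k-1})^2(\mathcal{E}_{k-1}+\mathcal{D}_{k-1})$, strictly smaller than $\mathcal{E}_{k-1}$. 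Choosing $\tau$ small enough then gives the closed recursion $\mathcal{E}_k\le \tfrac{1}{2}\mathcal{E}_{k-1}+C'\mathcal{D}_{k-1}$, which iterated together with $\mathcal{D}_j\les\log R_j$ and the telescoping identity $\sum_{j<k}(1/2)^{k-1-j}\log R_j\les\log R_k$ produces $\mathcal{E}_k\les\log R_k$. Finally, \eqref{estimphik} at level $k$ follows from the same gradient/elliptic estimate as in the base case, now applied at scale $R_k$.

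The main obstacle is the bookkeeping of constants rather than any individual inequality. Unlike the regularity iteration of Section \ref{sec:smallscale}, here we subtract only the \emph{constant} drift $\nabla\Phi_{k-1}(0)$, not the affine correction involving $\nabla^2\Phi_{k-1}(0)$, so the iteration is not trying to force decay of a scale-invariant quantity but rather to propagate one that is supposed to grow logarithmically. One must carefully balance the smallness $\tau$ from Theorem \ref{theo:harmonicLagintro} against the unavoidable scale factor $(R_{k-1}/R_k)^2\sim 4$ so that the prefactor in $\mathcal{E}_k\les\log R_k$ remains uniformly bounded as $k\to K$. What ultimately makes the telescoping argument converge is precisely the hypothesis \eqref{hypdata} at \emph{every} intermediate dyadic scale, which supplies a controlled source term $\mathcal{D}_j\les\log R_j$ at each step.
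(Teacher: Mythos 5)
The approach you lay out—induction on $k$ carrying both \eqref{estimEk} and \eqref{estimphik}, applying Theorem~\ref{theo:harmonicLagintro} at each scale, splitting into the harmonic-approximation error plus a second-order Taylor remainder, and closing the recursion by a telescoping sum with the controlled source $\mathcal{D}_j\les\log R_j$—is indeed the Campanato iteration the paper has in mind, and you correctly pinpoint the structural difference from the $\eps$-regularity scheme of Section~\ref{sec:smallscale}: since the target measure is Lebesgue, subtracting only the constant drift $\nabla\Phi_{k-1}(0)$ keeps $\pi_k$ an optimal coupling (this translation-invariance fact, stated explicitly in the paper, is what licenses re-applying Theorem~\ref{theo:harmonicLagintro} at the next scale; you use it implicitly but should state it), and one then propagates a logarithmically growing quantity rather than forcing geometric decay.

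Two points deserve more care than you give them. First, the geometry of Theorem~\ref{theo:harmonicLagintro} at base scale $\rho$ takes input excess over $B_{6\rho}$, solves the Neumann problem on the good radius $\hat R\in(3\rho,4\rho)$, and outputs an estimate only on $B_\rho$; so if $R_{k-1}$ is to be the radius of $\Phi_{k-1}$'s Neumann problem, the output lives on a ball of radius $\sim R_{k-1}/3.5$, and the requirement that $B_{6R_k}$ sit inside this ball forces a jump factor well above $2$—your guess that $R_k$ is a good radius ``in $(3R_{k-1}/8,R_{k-1}/2)$'' does not match this bookkeeping, though the proposition's condition $R_{k-1}\ge 2R_k\gtrsim R_{k-1}$ allows any fixed ratio, and a larger jump only makes the factor $(R_k/R_{k-1})^2$ in the Taylor term more favorable while requiring a correspondingly smaller $\tau$. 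Second, the base case is not literally \eqref{hypdata}: the left-hand side of \eqref{estimEk} at $k=0$ is the full cost $\int|x-y|^2\,d\pi$ of the globally optimal coupling, and one passes to \eqref{hypdata} at $R\sim L$ by optimality of $\pi$ and the fact that $B_{R_0}$ essentially covers $Q_L$. These are real gaps in the writeup, but they are gaps of bookkeeping rather than of idea; the strategy itself is the one used in \cite[Prop.~1.9]{GHO1} as the paper describes it.
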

Let us point out that by invariance of the Lebesgue measure under translations, $\pi_k$ is the optimal transport plan between $\mu$ and the Lebesgue measure for every $k$ (this is the reason why we make the translation in the target space). \\
Letting $\tilde{h}=\sum_{k=0}^{K-1} \nabla \Phi_k(0)$ and undoing the iterative definition of $\pi_k$, we see that \eqref{estimEk} directly leads to
\eqref{strongestimdeter} with $h$ replaced by $\tilde{h}$. The proof of \eqref{strongestimdeter} is concluded by the estimate (see \cite[Prop. 1.10]{GHO1})
\[
 |h-\tilde{h}|\les \frac{\log r}{r}.
\]
This estimate is also crucial for the proof of \eqref{weakestimdeter}. Let us point out that  a naive estimate using \eqref{estimphik} leads to 
\[|\tilde{h}|^2\les \log^3 L\]
which is suboptimal. In order to obtain a shift with the optimal estimate \eqref{optimh} it is therefore 
important  to take into account cancellations and replace $\tilde{h}$ by $h$.   \\

Let us close this note by pointing out that in dimension $d\ge 3$, the optimal transport plans corresponding to a very closely related optimal matching problem, have been used in 
\cite{HuSt13}
to construct in the limit $L\to \infty$, a stationary and locally optimal coupling between the Poisson point process on $\R^d$ and the Lebesgue measure. 
For $d=2$, such a coupling is expected not to exist. However,  using \eqref{strongestim}  and passing to the limit $L\to \infty$, it is possible to construct (at least in the sense of Young measures) 
a  coupling between the Poisson point process on $\R^2$ and the Lebesgue measure, which is locally optimal  and has stationary  {\it increments}  
(see \cite[Th.1.2]{GHO2}). 

\subsection*{Acknowledgements}
This research has been partially supported by the ANR project SHAPO.

\bibliographystyle{amsplain}
\bibliography{OT}
 \end{document}